\newcommand{\lr}[1]{\langle {#1} \rangle}
\newtheorem{Theoremx}{Theorem}
\newtheorem{Theorem}{Theorem}[section]
\newtheorem{Potential Theorem}[Theorem]{Potential Theorem}
\newtheorem{Lemma}[Theorem]{Lemma}
\newtheorem{Proposition}[Theorem]{Proposition}
\theoremstyle{definition}
\newtheorem{Definition}[Theorem]{Definition}
\newtheorem{Examples}[Theorem]{Examples}
\theoremstyle{remark} 
\newtheorem{Remark}[Theorem]{Remark}
\DeclareMathOperator{\height}{ht}
\DeclareMathOperator{\Max}{Max}
\DeclareMathOperator{\Sym}{Sym}
\DeclareMathOperator{\Hom}{Hom}
\DeclareMathOperator{\Spec}{Spec}
\DeclareMathOperator{\Min}{Min}
\DeclareMathOperator{\rank}{rank}
\def\p{\mathfrak{p}}
\def\q{\mathfrak{q}}
\def\m{\mathfrak{m}}
\def\a{\mathfrak{a}}
\def\C{\mathscr{C}}
\def\E{\mathscr{E}}
\def\N{\mathbb{N}}
\def\ds{\displaystyle}
\newcommand{\gs}{\geqslant}
\newcommand{\ls}{\leqslant}
\begin{document}

\title{Generalizing Serre's Splitting Theorem and Bass's Cancellation Theorem via free-basic elements}
\author{Alessandro De Stefani}
\address{Department of Mathematics, Royal Institute of Technology (KTH), Stockholm, 100 44, Sweden}
\email{ads@kth.se}
\author{Thomas Polstra}
\address{Department of Mathematics, University of Missouri-Columbia, Columbia, MO 65211}
\email{tmpxv3@mail.missouri.edu}
\author{Yongwei Yao}
\address{Department of Mathematics and Statistics, Georgia State University, 30 Pryor Street, Atlanta, GA 30303}
\email{yyao@gsu.edu}

\maketitle
\begin{abstract} We give new proofs of two results of Stafford from \cite{Stafford}, which generalize two famous Theorems of Serre and Bass regarding projective modules. Our techniques are inspired by the theory of basic elements. Using these methods we further generalize Serre's Splitting Theorem by imposing a condition to the splitting maps, which has an application to the case of Cartier algebras.
\end{abstract} 

\section{Introduction}

Throughout this section, $R$ denotes a commutative Noetherian ring with unity. The existence of unimodular elements has always played a crucial role in algebraic $K$-theory.  An element of a projective module is called unimodular if it generates a free summand. Note that unimodular elements are basic elements, a notion introduced by Swan \cite{Swan}: an element of an module is called basic if remains a minimal generator after localization at every prime of the ring. In a fundamental paper from 1971, Eisenbud and Evans make use of basic elements for modules that are not necessarily projective. Using basic element theory, Eisenbud and Evans generalize and greatly improve several theorems about projective modules, as well as about general finitely generated modules \cite{EisenbudEvans}. Two such theorems are Serre's Splitting Theorem and Bass's Cancellation Theorem on projective modules.

\begin{Theorem}[\cite{Serre}, Theorem 1 and \cite{Bass}, Theorem 8.2]\label{Serre's Theorem on Projective Modules} Let $R$ be a commutative Noetherian ring of Krull dimension $d$. Let $P$ be a finitely generated projective $R$-module whose rank at each localization at a prime ideal is at least $d+1$. Then $P$ contains a free $R$-summand. 
\end{Theorem}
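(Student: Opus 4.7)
The plan is to produce a unimodular element $x\in P$, that is, an element such that the map $R\to P$ defined by $1\mapsto x$ is a split monomorphism with free cyclic image. Because $P$ is finitely generated and projective, splitting of such a map can be checked locally: $R\to P$ is split if and only if its localization $R_{\p}\to P_{\p}$ is split for every prime $\p$ of $R$. Since each $P_{\p}$ is free of rank at least $d+1$, this local splitting holds precisely when the image of $x$ in $P_{\p}/\p P_{\p}$ is nonzero; equivalently, $x$ is a minimal generator of $P_{\p}$, i.e.\ a basic element at $\p$ in the sense of Swan. The problem thus reduces to finding a single $x\in P$ which is basic at every prime of $R$.

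To produce such an $x$, I would follow the Eisenbud--Evans strategy. Starting from any finite generating set $x_0,x_1,\dots,x_n$ of $P$, search for $x$ of the form $x = x_0 + a_1x_1+\cdots+a_nx_n$ with coefficients $a_i\in R$ selected inductively. The relevant primes are organized by $\dim(R/\p)$: at each stage one handles all primes $\p$ of a fixed value of this invariant. Within each such stratum only finitely many primes are truly critical, namely the generic points of the stratum, so prime avoidance produces a coefficient adjustment which makes $x$ basic at those primes, while the adjustment is chosen from a sufficiently deep ideal so as not to destroy basicness at primes already treated. The numerical input driving the procedure is the hypothesis $\mathrm{rk}_{\p}(P) \geq d+1 > \dim(R/\p)$, which leaves room at every prime to perform the modification.

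Once such an $x$ is produced, the map $R_{\p}\to P_{\p}$ at each $\p$ is a split injection because $x$ extends to a basis of the free module $P_{\p}$. Since $P$ is finitely presented, this local splitting implies global splitting, and we obtain $P\cong R\oplus Q$ for some finitely generated projective $R$-module $Q$.

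The step I expect to be the main obstacle is the inductive globalization: prime avoidance is only available for finitely many primes at a time, so one must argue carefully that working stratum by stratum, using only the generic points within each stratum, suffices to control basicness at every prime of $R$. The refined formalism of free-basic elements developed in this paper is tailored to exactly this setting, and I anticipate it will streamline the verification that adjustments at one stratum preserve the local splittings established at earlier strata.
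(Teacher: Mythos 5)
Your proposal is correct and is essentially the approach the paper takes in Section~\ref{Free-Basic Section}: reduce to finding a locally unimodular (basic) element and produce it by an Eisenbud--Evans style inductive prime-avoidance argument, here packaged as the free-basic element machinery culminating in Theorem~\ref{Free-basic elements exist} and Theorem~\ref{Generalized Serre}, of which Theorem~\ref{Serre's Theorem on Projective Modules} is the projective special case. The globalization step you flag as the main obstacle is precisely what Lemma~\ref{Crucial Lemma} and Lemma~\ref{Main Lemma} carry out, with the finitely many critical primes taken to be the generic points of the closed sets $Y_t=\{\p \mid \delta_\p(S)\le t\}$ rather than a literal stratification by $\dim(R/\p)$.
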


\begin{Theorem}[\cite{Bass}, Theorem 9.1] \label{Bass's Cancellation Theorem} Let $R$ be a commutative Noetherian ring of finite Krull dimension $d$, and $P$ be a finitely generated projective $R$-module whose rank at each localization at a prime ideal is at least $d+1$. Let $Q$ be a finitely generated projective $R$-module, and assume that $Q \oplus P \cong Q \oplus N$ for some $R$-module $N$. Then $P \cong N$.
\end{Theorem}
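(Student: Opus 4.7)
The plan is first to reduce to the case $Q = R$, and then to modify the isomorphism $\phi : R \oplus P \to R \oplus N$ by elementary automorphisms until it becomes block diagonal, at which point the restriction to $P$ yields the desired isomorphism $P \cong N$. The main input, and the main obstacle, is a basic element argument exploiting the rank hypothesis on $P$.

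For the reduction, choose $Q'$ with $Q \oplus Q' \cong R^{k}$ and add it to both sides of $Q \oplus P \cong Q \oplus N$ to obtain $R^{k} \oplus P \cong R^{k} \oplus N$. Since $R^{k-1} \oplus P$ still has rank at least $d+1$ at every prime, iterating the case $k=1$ reduces us to proving: if $\phi : R \oplus P \to R \oplus N$ is an isomorphism and $P$ satisfies the rank hypothesis, then $P \cong N$. Write $\phi$ in matrix form as
\[
\phi \;=\; \begin{pmatrix} a & h \\ b & k \end{pmatrix}, \qquad a \in R,\ h : P \to R,\ b \in N,\ k : P \to N,
\]
and note that surjectivity of $\phi$ onto the $R$-factor gives $aR + h(P) = R$.

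The key step is to produce $p_0 \in P$ such that $u := a + h(p_0)$ is a unit in $R$. Once this is in hand, precomposing $\phi$ with the elementary automorphism $\begin{pmatrix} 1 & 0 \\ p_0 & 1_{P} \end{pmatrix}$ of $R \oplus P$ replaces the top-left entry of the matrix by $u$. Two further elementary operations (one on the source, using $-u^{-1}h$, and one on the target, using $-b' u^{-1}$ where $b' = b + k(p_0)$) clear the off-diagonal blocks and leave $\phi$ in the block-diagonal form $\begin{pmatrix} u & 0 \\ 0 & \psi \end{pmatrix}$, with $\psi : P \to N$ forced to be an isomorphism.

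The hard step is the production of $p_0$. For each maximal ideal $\m \subseteq R$, we need $a + h(p_0) \notin \m$, which is a coset-avoidance condition on the class of $p_0$ in $P/\m P$. For $\m \supseteq (a)$ the identity $aR + h(P) = R$ forces $h(P) \not\subseteq \m$, so the admissible locus at $\m$ is the complement of a proper submodule; a similar analysis handles $\m \not\supseteq (a)$. What one needs is a single $p_0$ that lies in the admissible locus at every maximal ideal simultaneously. Since $\dim R \leq d$ and $P$ has rank at least $d+1$ at every prime, this global prime avoidance is exactly the sort of statement delivered by the basic element machinery of Eisenbud--Evans---the same engine that proves Theorem \ref{Serre's Theorem on Projective Modules}---and producing the element $p_0$ from the rank surplus of $P$ is the principal obstacle to clear.
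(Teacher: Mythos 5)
Your reduction to $Q = R$ and the final clearing of off-diagonal blocks are fine, but the key step --- producing $p_0 \in P$ with $a + h(p_0)$ a unit --- is not a lemma waiting to be proved; it is \emph{false}, and the rank hypothesis on $P$ does not rescue it. Take $R = \mathbb{Z}$ (so $d = 1$), $P = N = \mathbb{Z}^2$ (rank $2 = d+1$), and
\[
\phi = \begin{pmatrix} 2 & 5 & 0 \\ 1 & 3 & 0 \\ 0 & 0 & 1 \end{pmatrix},
\]
an isomorphism since $\det\phi = 1$. Here $a = 2$ and $h(p_1,p_2) = 5p_1$, so $a + h(p_0) = 2 + 5p_1 \equiv 2 \pmod{5}$ for every $p_0 = (p_1,p_2)\in P$, and this is never $\pm 1$. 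What you actually need is a unit in the coset $a + h(P)$, and $h(P)$ is an ideal (here $5\mathbb{Z}$) that can be small regardless of the rank of $P$; the excess rank of $P$ never enters once $h$ has collapsed it. Your coset-avoidance heuristic breaks down because the constraints at different maximal ideals depend on $p_0$ only through $h(p_0)$, so they can all be ``parallel'' and trap you, exactly as they do here.

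The gap is therefore structural, not technical. The free-basic element machinery (the paper's Theorem~\ref{Free-basic elements exist}) is a statement about \emph{elements} of $R\oplus M$, not about the functional $(a,h)\in R\oplus\Hom_R(P,R)$. The paper applies it to $\alpha(1,0) = (a,x_1) \in R \oplus M$ with $\alpha = \phi^{-1}$, obtaining $z \in M$ with $x := x_1 + az$ generating a free summand of $M$, and then composes \emph{three} elementary automorphisms of $R\oplus M$. The decisive one is the upper-triangular transvection $\begin{pmatrix} 1 & \psi \\ 0 & 1_M \end{pmatrix}$ built from a splitting $\psi\colon M\to R$ of the free summand $\lr{x}$, calibrated so $\psi(x)=1-a$; it is this map, not any $p_0$, that converts $a$ into a unit. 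Your plan uses only a lower-triangular transvection $\begin{pmatrix} 1 & 0 \\ p_0 & 1_P \end{pmatrix}$ at this stage, which has no access to any such splitting $M\to R$. Allowing upper-triangular transvections (and the splitting map they carry) is exactly what fixes the argument, and at that point you have essentially reproduced the paper's proof.
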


Given a projective module $P$, an equivalent way of saying that $P_\p$ has rank at least $d+1$ for all $\p \in \Spec(R)$ is to say that, locally at each prime, it contains a free summand of rank at least $d+1$. In this article, we present new methods to extend Theorems~\ref{Serre's Theorem on Projective Modules} and~\ref{Bass's Cancellation Theorem} to all finitely generated modules, not necessarily projective, that contain a free summand of large enough rank at each localization at a prime.

\begin{Theoremx} \label{Serre's Theorem Intro} Let $R$ be a commutative Noetherian ring, and $M$ a finitely generated $R$-module. Assume that $M_\p$ contains a free $R_\p$ summand of rank at least $\dim (R/\p)+1$ for each $\p\in \Spec(R)$. Then $M$ contains a free $R$-summand.
\end{Theoremx}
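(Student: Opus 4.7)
The plan is to introduce a strengthening of ``basic element'' and adapt the Eisenbud--Evans iterative method from \cite{EisenbudEvans}. Call $x \in M$ \emph{free-basic at $\p$} if $x_\p$ generates a rank-one free direct summand of $M_\p$; equivalently, if there exists $\varphi_\p \in \Hom_{R_\p}(M_\p, R_\p)$ with $\varphi_\p(x_\p) = 1$. Call $x$ \emph{free-basic} if this holds at every $\p \in \Spec(R)$. A free-basic element immediately gives what we want: the evaluation map $\Hom_R(M, R) \to R$, $\varphi \mapsto \varphi(x)$, is surjective after localizing at every prime, hence surjective, so the inclusion $R \to M$, $1 \mapsto x$, splits and $Rx$ is a free rank-one summand of $M$. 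Thus the theorem reduces to producing a free-basic element.

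To produce one, fix generators $x_1, \dots, x_n$ of $M$ and consider candidates of the form $x = x_1 + a_2 x_2 + \dots + a_n x_n$ with $a_i \in R$. Define the bad locus $B(x) = \{ \p \in \Spec(R) : x \text{ is not free-basic at } \p \}$. The goal is to modify $x$ iteratively until $B(x) = \emptyset$. Suppose $B(x)$ is nonempty and pick $\p$ maximal in $\overline{B(x)}$ (in the specialization order). By hypothesis $M_\p$ admits a free summand of rank $r \geq \dim(R/\p) + 1$. Lifting a splitting to an open neighborhood of $\p$, one obtains elements $y_1, \dots, y_r \in M$ that freely generate a direct summand of $M$ after localization near $\p$. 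A prime avoidance argument, carried out over the finitely many generic points of $\overline{B(x)}$, then allows choosing $b_1, \dots, b_r \in R$ so that $x' = x + \sum b_i y_i$ is free-basic at $\p$ and in a neighborhood of $\p$, while remaining free-basic at every prime where $x$ already was. The dimensional slack $r \geq \dim(R/\p) + 1$ is exactly what is needed to make this avoidance work. Noetherian induction on $\dim \overline{B(x)}$ then terminates in finitely many steps with a free-basic element.

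The main obstacle is verifying the inductive step in full generality. Two technical points must be addressed. First, the free-basic locus must be shown to be open: if $x$ is free-basic at $\p$, then $x$ is free-basic in an open neighborhood of $\p$. This uses that a splitting $\varphi_\p \in \Hom_{R_\p}(M_\p, R_\p)$ lifts to $\Hom_{R_f}(M_f, R_f)$ for some $f \notin \p$, since $\Hom$ commutes with localization for finitely presented modules over a Noetherian ring. Second, the prime avoidance must be set up so that adjusting along the $y_i$ does not ruin free-basicness at primes outside $\overline{B(x)}$, nor at the other maximal points of $\overline{B(x)}$; this is the heart of the Eisenbud--Evans bookkeeping, and its execution in the ``free'' setting rather than the ``basic'' setting is the chief technical novelty of the argument.
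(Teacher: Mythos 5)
Your proposal captures the paper's central idea: reduce Theorem A to the existence of a ``free-basic element'' (one generating a rank-one free summand locally at every prime, called an $X$-free-basic element in the paper), observe that this is an open condition, and produce such an element by prime avoidance using the dimensional slack $\dim(R/\p)+1$. The reduction from free-basic element to free summand via the splitting map $\Hom_R(M,R)\to R$ is exactly what the paper does.

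Where you diverge from the paper is in the iterative scheme: you fix a candidate $x$ and try to shrink the bad locus $B(x)$ by Noetherian induction, whereas the paper (Lemma~\ref{Main Lemma}) fixes a free-basic \emph{set} $S=\{x_1,\ldots,x_n\}$ and shrinks its cardinality one step at a time, replacing $x_i$ by $x_i + a_i x_n$ until a single free-basic element remains. Both schemes need the same finiteness phenomenon, but the paper isolates it as Lemma~\ref{Crucial Lemma}: there is a fixed \emph{finite} set $\Lambda\subseteq \Spec(R)$ of test primes such that, for any $\p\notin\Lambda$, the invariant $\delta_\p(S)$ (the largest rank of a free summand of $M_\p$ inside $\lr{S}_\p$) agrees with $\delta_\q(S)$ for some $\q\in\Lambda$ with $\q\subsetneq\p$; hence one only ever needs to maintain free-basicness at the primes of $\Lambda$, and semicontinuity handles all the others automatically. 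This is what makes the prime avoidance a genuinely finite problem and is the enabling step your sketch does not supply.

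The concrete gap is in the sentence asserting that $x'=x+\sum b_iy_i$ can be chosen to ``remain free-basic at every prime where $x$ already was.'' Taken literally this is unattainable: there are infinitely many primes outside $B(x)$, and a modification by $\sum b_iy_i$ can spoil free-basicness at primes you are not tracking, so $B(x')\subseteq B(x)$ does not come for free and your Noetherian induction on $\dim\overline{B(x)}$ may fail to descend. What actually works — and is the content of the paper's Lemma~\ref{Crucial Lemma} together with the careful choice $r\in(\q_1\cap\cdots\cap\q_{\ell-1})\smallsetminus\q_\ell$ inside the inductive step of Lemma~\ref{Main Lemma} — is to control $\delta_\q$ only at the finitely many $\q\in\Lambda$ and let openness upgrade this to all primes. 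Your proposal gestures at this (``the finitely many generic points of $\overline{B(x)}$'' and ``Eisenbud--Evans bookkeeping'') but never pins down the finite tracking set nor proves that it suffices, and that is precisely the technical heart of the argument.
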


\begin{Theoremx} \label{Bass's Theorem Intro} Let $R$ be a commutative Noetherian ring and $M$ a finitely generated $R$-module such that, for each $\p\in \Spec(R)$, $M_\p$ contains a free $R_\p$ summand of rank at least $\dim(R/\p)+1$. For any finitely generated projective $R$-module $Q$ and any finitely generated $R$-module $N$, if $Q\oplus M \cong Q \oplus N$, then $M \cong N$.
\end{Theoremx}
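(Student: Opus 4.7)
The plan is to reduce to the case $Q=R$ and then argue as in Bass's classical proof, replacing the projective basic-element arguments with the free-basic element / generalized Serre splitting machinery available through Theorem~A.

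For the reduction, choose $Q'$ with $Q \oplus Q' \cong R^n$. Taking direct sum with $Q'$ on both sides of $Q\oplus M\cong Q\oplus N$ yields $R^n\oplus M\cong R^n\oplus N$. I plan to cancel one copy of $R$ at a time; at each intermediate stage the ``large'' module is of the form $R^k\oplus M$, and since the hypothesis on $M$ passes trivially to $R^k\oplus M$ (the local free rank only increases), this reduces the problem to proving: if $\phi\colon R\oplus M\xrightarrow{\cong} R\oplus N$, then $M\cong N$.

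Given such $\phi$, set $(a,m):=\phi^{-1}(1,0)\in R\oplus M$; this element generates a free summand of $R\oplus M$ because its image does. The core of the argument is the following claim: there exists an automorphism $\psi$ of $R\oplus M$ with $\psi(1,0)=(a,m)$. Granting this, $\phi\psi\colon R\oplus M\to R\oplus N$ fixes $(1,0)$, so writing $\phi\psi$ in the block form $\begin{pmatrix}1 & f \\ 0 & g\end{pmatrix}$ with $f\colon M\to R$ and $g\colon M\to N$, a routine check forces $g$ to be an isomorphism $M\cong N$.

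To build $\psi$ I intend to use elementary transvections of $R\oplus M$, namely $(r,n)\mapsto(r+\lambda(n),n)$ for $\lambda\in\Hom(M,R)$ and $(r,n)\mapsto(r,n+rm_0)$ for $m_0\in M$, and realize the following three-step reduction. First, replace $(a,m)$ by $(a,m+am_0)$ where $m_0\in M$ is chosen so that $m':=m+am_0$ generates a \emph{free} summand $Rm'\subset M$. Second, using that $(a,m')$ is still unimodular, find $\lambda\colon M\to R$ with $\lambda(m')=1-a$ (defined on the free summand $Rm'$ and extended by zero on a complement) to pass to $(1,m')$. Third, apply $(r,n)\mapsto(r,n-rm')$ to arrive at $(1,0)$.

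The main obstacle is the very first step: producing $m_0$ so that $m+am_0$ generates a free summand of $M$. This is precisely a ``free-basic element with prescribed parity'' statement, and it is here that the generalized hypothesis on $M$ must be used; the classical Bass proof relies on projectivity to manipulate elements of $M$ freely, whereas in our non-projective setting we need the free-basic element technology of the paper (the same circle of ideas powering Theorem~A) to guarantee that among the translates $m+aM$ one finds an element generating a free $R$-summand. Once this existence result is in place, the remainder of the argument is formal linear algebra, and the induction from Step~1 closes out the proof.
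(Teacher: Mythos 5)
Your proposal is correct and follows essentially the same route as the paper's proof: reduce to $Q=R$, observe that $(a,m)=\phi^{-1}(1,0)$ is unimodular, use the free-basic element machinery (the "moreover" clause of Theorem~\ref{Free-basic elements exist}) to find $m_0$ with $m+am_0$ generating a free summand of $M$, and then straighten $(a,m)$ to $(1,0)$ by three elementary transvections before extracting the isomorphism $M\cong N$ from the block form. The paper's $\beta,\gamma,\eta$ are exactly your three transvections, so this is a faithful reconstruction of the argument.
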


Stafford proved analogue results in \cite[Corollaries 5.9 and 5.11]{Stafford} where, instead of looking at local number of free summands, he used the notion of $r$-$\rank$. Stafford's results are actually very general in that he proved analogues of Theorem~\ref{Serre's Theorem Intro} and Theorem~\ref{Bass's Theorem Intro} for rings that are not necessarily commutative.

Let $M$ be a finitely generated $R$-module. Observe that the conclusion of Theorem~\ref{Serre's Theorem Intro} can be viewed as a statement about the existence of a surjective homomorphism inside $\Hom_R(M,R)$. For several applications, it is useful to restrict the selection of homomorphisms $M \to R$ to those belonging to a given $R$-submodule $\E$ of $\Hom_R(M,R)$. This is the scenario arising, for instance, from the study of the F-signature of Cartier subalgebras of $\C^R = \oplus_e \Hom_R(F^e_*R,R)$, where $R$ is an F-finite local ring of prime characteristic \cite{BST2012}. We can naturally generalize the definitions in this paper and results about free-basic elements to the setup of $R$-submodules $\E$ of $\Hom_R(M,R)$. The main achievement in this direction is a new generalization of Theorem~\ref{Serre's Theorem Intro}.

\begin{Theoremx}\label{Generalized Serre 2 Intro} 
Let $R$ be a commutative Noetherian ring, $M$ a finitely generated $R$-module, and $\E$ an $R$-submodule of $\Hom_R(M,R)$. Assume that, for each $\p \in \Spec(R)$, $M_\p$ contains a free $\E_\p$-summand of rank at least $\dim(R/\p)+1$.  Then $M$ contains a free $\E$-summand.
\end{Theoremx}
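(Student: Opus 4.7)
The plan is to reduce Theorem~C to producing a single element $m \in M$ that is \emph{$\E$-free-basic at every prime}, meaning that for each $\p \in \Spec(R)$ there exists $\varphi \in \E$ with $\varphi(m) \notin \p$. This is equivalent to saying that the ideal $\E(m) := \sum_{\varphi \in \E} R\varphi(m)$ is not contained in any prime, i.e., $\E(m) = R$. Once such $m$ is found, one can write $1 = \sum_i r_i \varphi_i(m)$ with $\varphi_i \in \E$ and $r_i \in R$; setting $\psi := \sum_i r_i \varphi_i \in \E$ then gives $\psi(m)=1$, so $Rm$ is a free summand of $M$ split off by an element of $\E$, i.e., a free $\E$-summand.

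To produce such an $m$, I would adopt an Eisenbud--Evans style prime-avoidance argument adapted to the $\E$-relative setting. Fix a finite set of generators $m_1, \dots, m_s$ of $M$ and look for $c_2, \dots, c_s \in R$ such that the candidate $m := m_1 + c_2 m_2 + \cdots + c_s m_s$ is $\E$-free-basic everywhere. Starting from $m = m_1$, let $Z := V(\E(m)) \subseteq \Spec(R)$ be the closed bad locus, and argue by Noetherian induction to strictly shrink $Z$. At a prime $\p$ maximal in $Z$, with $d := \dim(R/\p)$, the hypothesis furnishes a free $\E_\p$-summand of $M_\p$ of rank at least $d+1$. A dual-basis argument applied to the generators $m_1, \ldots, m_s$ supplies $d+1$ many $R_\p$-combinations of the $m_i$ which are $\E_\p$-free-basic at $\p$. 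This pool is large enough to choose a correction $\Delta = c'_2 m_2 + \cdots + c'_s m_s$ that is $\E$-free-basic at $\p$, and lies in an auxiliary ideal chosen to avoid the finitely many primes of $\Spec(R) \setminus Z$ that are minimal with the property of being already good, so that $m+\Delta$ remains $\E$-free-basic on $\Spec(R)\setminus Z$ while curing the failure at~$\p$. Iterating shrinks $Z$ and eventually yields $Z = \emptyset$.

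The main obstacle is the prime-avoidance step: one must simultaneously avoid finitely many primes at which the adjustment could spoil $\E$-free-basicness, and the hypothesis rank~$\geq \dim(R/\p)+1$ (rather than merely $\dim(R/\p)$) is precisely the extra slack needed for such a correction $\Delta$ to exist. This mirrors the classical Eisenbud--Evans combinatorics, where the extra unit of rank translates into the freedom to perform prime avoidance across the stratification of $\Spec(R)$ by the function $\p \mapsto \dim(R/\p)$. Making this precise in the $\E$-relative setting should proceed through the theory of $\E$-free-basic elements developed in the earlier sections: once the $\E$-analogue of the Eisenbud--Evans basic-element existence theorem has been established, Theorem~C is an immediate consequence of the reduction in the first paragraph.
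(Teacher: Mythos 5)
Your reduction in the first paragraph is correct and matches the paper's: finding an $x \in M$ such that for every prime $\p$ there is $\varphi \in \E$ with $\varphi(x) \notin \p$ is equivalent to finding a free $\E$-summand of rank one, since then $\sum_\varphi R\varphi(x) = R$ and one can assemble a global splitting $\psi \in \E$ with $\psi(x) = 1$. You also correctly identify the ``extra unit of rank'' as the slack needed for a prime-avoidance argument, which is exactly the role it plays in Lemma~\ref{Main Lemma 2}.

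However, the Noetherian-induction mechanism you sketch has a genuine gap. You propose to fix a minimal prime $\p$ of the bad locus $Z = V(\E(m))$ by a correction $\Delta = c_2'm_2 + \cdots + c_s' m_s$, while ensuring $m + \Delta$ stays good on $\Spec(R) \setminus Z$ by forcing $\Delta$ into an ``auxiliary ideal.'' These two requirements are in tension: to cure the failure at $\p$ you need $\Delta \not\equiv 0$ in $M_\p/\p M_\p$, whereas the natural way to preserve goodness on the complement of $Z$ is to insist $\Delta \in J M$ where $Z = V(J)$ --- but $J \subseteq \p$, so this would kill the correction at $\p$. Moreover, ``the finitely many primes of $\Spec(R) \setminus Z$ minimal with the property of being already good'' is not a well-defined finite set: $\Spec(R)\setminus Z$ is open and has no minimal elements in general, so there is no finite test for preservation of goodness on it. Without a precise replacement for this step, the bad locus is not guaranteed to shrink.

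The paper resolves exactly this tension via a different device. Lemma~\ref{Closed Set 2} shows the sublevel sets $\{\p : \delta_\p^\E(S) \le t\}$ are closed, and Lemma~\ref{Crucial Lemma 2} packages their (finitely many) generic points into a fixed finite set $\Lambda$, with the crucial property that every $\p \in X \setminus \Lambda$ has a representative $\q \in \Lambda$ with $\q \subsetneq \p$ and $\delta_\q^\E = \delta_\p^\E$. The Main Lemma~\ref{Main Lemma 2} then does its induction on the elements of $\Lambda$ (not on the bad locus), choosing corrections $r \in (\q_1 \cap \cdots \cap \q_{\ell - 1}) \setminus \q_\ell$ so that fixing $\q_\ell$ cannot spoil the earlier $\q_i$. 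Behavior at primes outside $\Lambda$ is controlled automatically by the representative property, bypassing the preservation problem entirely. The paper also works over $X = j\text{-}\Spec(R)$ with $\dim_X(\p)$ rather than $\dim(R/\p)$, which is a mild sharpening compatible with your hypotheses (maximal ideals all lie in $j\text{-}\Spec(R)$, so checking splittings there suffices). Your high-level plan is sound and you correctly identify that Theorem~\ref{Generalized Serre 2 Intro} reduces to an existence theorem for $\E$-free-basic elements, but the existence argument itself is the content of the theorem, and the Noetherian induction you outline does not yet deliver it.
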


As the main application of Theorem~\ref{Generalized Serre 2 Intro}, the authors establish in \cite{DSPY} the existence of a global F-signature with respect to Cartier subalgebras of $\C^R$, where $R$ is an F-finite ring of prime characteristic, not necessarily local. 

The statements of Theorems~\ref{Serre's Theorem Intro},~\ref{Bass's Theorem Intro} and~\ref{Generalized Serre 2 Intro} can actually be strengthen by just requiring that $M_\p$ has a free summand (free $\E_\p$-summand for Theorem~\ref{Generalized Serre 2 Intro}) of rank at least $1+\dim_{\tiny{j\mbox{-}\Spec(R)}}(\p)$, for each $\p \in j$-$\Spec(R)$. Here, $\dim_{j\mbox{-}\Spec(R)}(\p)$ denotes the supremum over the lengths of increasing chains of primes in $j$-$\Spec(R)$ that start with $\p$. In addition, in Theorem~\ref{Serre's Theorem Intro}  (respectively, Theorem~\ref{Generalized Serre 2 Intro}), we can get even stronger conclusions, provided some additional assumptions hold. For instance, assume that there exists a fixed $R$-submodule $N$ of $M$ such that, for all $\p \in \Spec(R)$, $M_\p$ has a free $R_\p$-summand (respectively, free $\E_\p$-summand) of rank at least $\dim(R/\p)+1$ that is contained in $N_\p$. Then, the global free $R$-summand of $M$ can be realized inside $N$ as well. Furthermore, if for all $\p \in \Spec(R)$ the rank of the free summand of $M_\p$ is at least $\dim(R/\p)+i$ for some fixed positive integer $i$, then the global free summand of $M$ can be realized of rank at least $i$.

This article is structured as follows: in Section~\ref{SectionElemApproach}, we present a version of Theorem~\ref{Serre's Theorem Intro} via rather direct and elementary techniques. However, the methods we employ are not effective enough to prove the result in its full generality. In Section~\ref{Free-Basic Section}, we introduce the notion of free-basic element, which allows us to prove Theorem~\ref{Serre's Theorem Intro} and Theorem~\ref{Bass's Theorem Intro} in their full generality. We believe that free-basic elements are worth exploring and interesting on their own, as they share many good properties both with basic elements and unimodular elements. In Section~\ref{Cartier Section}, we generalize free-basic elements to the setup of $R$-submodules of $\Hom_R(M,R)$, and prove Theorem~\ref{Generalized Serre 2 Intro}.

\section{An elementary approach to Theorem~\ref{Serre's Theorem Intro}} \label{SectionElemApproach} 
Throughout this section, $R$ is a commutative Noetherian ring with identity, and of finite Krull dimension $d$. Given an ideal $I$ of $R$, we denote by $V(I)$ the closed set of $\Spec(R)$ consisting of all prime ideals that contain $I$. Given $s \in R$, we denote by $D(s)$ the open set consisting of all prime ideals that do not contain $s$. Recall that $\{D(s) \mid s \in R\}$ is a basis for the Zariski topology on $\Spec(R)$, and that $D(s)$ can be identified with $\Spec(R_s)$. We present an elementary approach to Theorem~\ref{Serre's Theorem Intro}, which will lead to a proof in the case when $R$ has infinite residue fields.  We first need to recall some facts about the symmetric algebra of a module. If $M$ is a non-zero finitely generated $R$-module, we denote by $\Sym_R(M)$ the symmetric algebra of $M$ over $R$. Given a presentation 
\[
\xymatrixcolsep{5mm}
\xymatrixrowsep{2mm}
\xymatrix{
R^{n_2}\ar[rr]^-{(a_{ij})} && R^{n_1}\ar[rr] && M\ar[rr] && 0
}
\]
then we can describe $\Sym_R(M)$, as an $R$-algebra, in the following way:
\[
\ds \Sym_R(M)\cong R[x_1,x_2,\ldots,x_{n_1}]/J, \ \ \mbox{ where } J=\left(\sum_{i=1}^{n_1}a_{ij}x_i\mid 1\ls j\ls n_2\right). 
\]
When $M$ is free of rank $n$, then $\Sym_R(M)$ is just a polynomial ring over $R$, in $n$ indeterminates. Huneke and Rossi give a formula for the Krull dimension of a symmetric algebra.

\begin{Theorem}[\cite{HunekeRossi}, Theorem 2.6 (ii)]\label{Huneke Rossi} Let $R$ be a ring which is the homomorphic image of a Noetherian universally catenary domain of finite dimension. Let $M$ be a finitely generated $R$-module, then $\ds\dim(\Sym_R(M))=\max_{\p\in\Spec(R)}\{\dim(R/\p)+\mu(M_\p)\}$.
\end{Theorem}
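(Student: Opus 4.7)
The plan is to prove the two inequalities separately. For the upper bound, I would apply the dimension formula (in its height form) for finitely generated algebras over a universally catenary Noetherian domain, which is available by hypothesis on $R$. For the lower bound, I would construct an explicit long chain of primes in $S := \Sym_R(M)$ exploiting its graded structure.

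For the upper bound, let $\mathfrak{Q}_0$ be a minimal prime of $S$ achieving $\dim(S) = \dim(S/\mathfrak{Q}_0)$, and set $\p := \mathfrak{Q}_0 \cap R$. Then $S/\mathfrak{Q}_0$ is a finitely generated domain over the universally catenary Noetherian domain $R/\p$ with $R/\p \hookrightarrow S/\mathfrak{Q}_0$. Matsumura's height formula, applied to a maximal ideal of $S/\mathfrak{Q}_0$ realizing $\dim(S/\mathfrak{Q}_0)$, yields
$$\dim(S/\mathfrak{Q}_0) \ls \dim(R/\p) + \mathrm{tr.deg}_{R/\p}(S/\mathfrak{Q}_0).$$
The transcendence degree equals the dimension of the generic fiber $(S/\mathfrak{Q}_0) \otimes_{R/\p} k(\p)$, which is a quotient of $S \otimes_R k(\p) \cong k(\p)[y_1, \ldots, y_{\mu(M_\p)}]$, and therefore bounded above by $\mu(M_\p)$. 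Hence $\dim(S) \ls \max_{\p \in \Spec(R)}\{\dim(R/\p) + \mu(M_\p)\}$.

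For the lower bound, given $\p \in \Spec(R)$, I would build a chain in $\Spec(S)$ of length $\mu + \dim(R/\p)$, where $\mu := \mu(M_\p)$. The chain of primes $(0) \subsetneq (y_1) \subsetneq \cdots \subsetneq (y_1,\ldots,y_\mu)$ in the fiber $S \otimes_R k(\p) \cong k(\p)[y_1,\ldots,y_\mu]$ contracts to a chain $\mathfrak{Q}_0 \subsetneq \mathfrak{Q}_1 \subsetneq \cdots \subsetneq \mathfrak{Q}_\mu$ of primes of $S$, each lying over $\p$. The key observation is that the top prime $\mathfrak{Q}_\mu$ coincides with the augmentation ideal $\p + S_+$ of the graded ring $S$, so $S/\mathfrak{Q}_\mu \cong R/\p$. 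A maximal chain of primes in $R/\p$ of length $\dim(R/\p)$ then lifts via this isomorphism to a chain of primes of $S$ strictly above $\mathfrak{Q}_\mu$, producing a chain in $\Spec(S)$ of total length $\mu + \dim(R/\p)$; thus $\dim(S) \gs \mu(M_\p) + \dim(R/\p)$.

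The main technical ingredient is Matsumura's height formula (cf. Matsumura, \emph{Commutative Ring Theory}, Theorem 15.6), which requires the base to be a universally catenary Noetherian domain --- automatic here, since quotients of universally catenary Noetherian rings inherit the property. Everything else is bookkeeping: identifying the generic fiber with a polynomial ring, and observing that the graded augmentation prime of $\Sym_R(M)$ collapses $S$ precisely onto $R/\p$, which seamlessly glues a fiber chain with any chain in $\Spec(R/\p)$.
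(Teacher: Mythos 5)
The paper cites this result from Huneke--Rossi without proof, so there is no in-paper argument to compare against; I am evaluating your proof on its own. Your argument is correct, and it is essentially the standard proof of this formula.

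Both halves check out. For the upper bound: $S/\mathfrak{Q}_0$ is a finitely generated domain over the Noetherian universally catenary domain $R/\p$ (a quotient of the given domain $D$, hence still universally catenary), $R/\p \hookrightarrow S/\mathfrak{Q}_0$ since $\p = \mathfrak{Q}_0 \cap R$, and the transcendence degree of $S/\mathfrak{Q}_0$ over $R/\p$ equals the dimension of the generic fiber $(S/\mathfrak{Q}_0)\otimes_{R/\p}k(\p)$, a domain quotient of $S\otimes_R k(\p)\cong\Sym_{k(\p)}(M_\p/\p M_\p)\cong k(\p)[y_1,\dots,y_{\mu(M_\p)}]$. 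One small remark: for this direction you only need the unconditional dimension \emph{inequality} (Matsumura, Thm.\ 15.5); the universal-catenarity hypothesis, which gives equality (Thm.\ 15.6), is not actually consumed by your upper-bound argument, though of course it is available. For the lower bound: the fiber $S\otimes_R k(\p)$ is a localization of $S/\p S$, so $\Spec(S\otimes_R k(\p))$ identifies homeomorphically with the primes of $S$ lying over $\p$; this is why the chain $(0)\subsetneq(y_1)\subsetneq\cdots\subsetneq(y_1,\dots,y_\mu)$ contracts to a strictly increasing chain $\mathfrak{Q}_0\subsetneq\cdots\subsetneq\mathfrak{Q}_\mu$ in $\Spec(S)$. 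Your identification $\mathfrak{Q}_\mu=\p+S_+$ is right: writing $s=s_0+s_+\in S$ in its degree-$0$ and positive-degree pieces, $s$ lands in $(y_1,\dots,y_\mu)$ precisely when $s_0\in\p$, and $\p S+S_+=\p+S_+$ with $S/(\p+S_+)\cong R/\p$. Concatenating with a length-$\dim(R/\p)$ chain above $\mathfrak{Q}_\mu$ gives $\dim S\gs \mu(M_\p)+\dim(R/\p)$, as you claim. The proof is complete and correct.
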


Now, suppose that $P$ is a finitely generated projective $R$-module, and suppose we have a surjection $R^n\to P \to 0$. Let $Q$ be its kernel. Applying $(-)^*:=\Hom_R(-, R)$ to the split short exact sequence $0\to Q\to R^n\to P \to 0$ gives another short exact sequence 
\[
\xymatrixcolsep{5mm}
\xymatrixrowsep{2mm}
\xymatrix{
0\ar[rr] &&  P^* \ar[rr] && (R^n)^*\cong R^n \ar[rr] && Q^* \ar[rr] && 0.
}
\]
This provides a natural way to view $\Sym_R(Q^*)$ as the homomorphic image of $R[x_1,\ldots,x_n] \cong \Sym_R(R^n)$. Moreover, as $Q$ is projective, if $\m\in \Max(R)$ is a maximal ideal, then $(Q/\m Q)^*\cong Q^*\otimes_R R/\m$, where the first $(-)^*$ is over $R/\m$ while the second is over $R$. In addition, we have $\Sym_{R/\m}(Q^*\otimes_R R/\m)\cong \Sym_R(Q^*)\otimes_R R/\m$. In fact, for any finitely generated $R$-module $M$, we have $\Sym_{R/\m}(M\otimes_R R/\m)\cong \Sym_R(M)\otimes_R R/\m$. To see this, one only needs to notice that if $R^{n_2}\rightarrow R^{n_1}\rightarrow M\rightarrow 0$ is a presentation of $M$ as an $R$-module, then applying $-\otimes_R R/\m$ gives a presentation of $M\otimes_R R/\m$ as an $R/\m$-vector space. The following lemma shows how the symmetric algebra can detect whether an element of a projective module is locally a minimal generator or not.

\begin{Lemma}\label{Lemma Max(Sym) VS MinGens} Let $R$ be a commutative Noetherian ring, $P$ be a finitely generated projective $R$-module with generators $\eta_1,\ldots,\eta_n$, and $Q$ be the kernel of the surjective map $R^n\rightarrow P$ which sends $e_i\mapsto \eta_i$. For any $r_1,\ldots,r_n\in R$ and $\m\in \Spec(R)$, we have $r_1\eta_1+\cdots+ r_n\eta_n\in \m P_\m$ if and only if $(\m,x_1-r_1,\ldots,x_n-r_n)$ is a maximal ideal of $\Sym_{R}(Q^*)$.
\end{Lemma}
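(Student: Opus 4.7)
The plan is to identify $\Sym_R(Q^*)$ as an explicit quotient of a polynomial ring and then compute $\Sym_R(Q^*)/(\m, x_1-r_1, \ldots, x_n-r_n)$ directly to read off when it is a field. Exploiting the projectivity of $P$, the sequence $0 \to Q \to R^n \to P \to 0$ splits, so dualizing yields a split short exact sequence $0 \to P^* \to (R^n)^* \to Q^* \to 0$. Identifying $(R^n)^*$ with $R^n$ via a dual basis $x_1, \ldots, x_n$, each $\phi \in P^*$ corresponds to $\sum_i \phi(\eta_i) x_i$; by functoriality of $\Sym_R(-)$ applied to the surjection $(R^n)^* \twoheadrightarrow Q^*$, this gives $\Sym_R(Q^*) \cong R[x_1, \ldots, x_n]/J$, where $J$ is the ideal generated by those linear forms as $\phi$ ranges over $P^*$.

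Next, I would evaluate at $x_i \mapsto r_i$: each generator $\sum_i \phi(\eta_i) x_i$ of $J$ maps to $\phi(\tau)$, with $\tau := r_1 \eta_1 + \cdots + r_n \eta_n$. Hence the quotient $\Sym_R(Q^*)/(x_1 - r_1, \ldots, x_n - r_n)$ is naturally isomorphic to $R/I_\tau$, where $I_\tau := (\phi(\tau) : \phi \in P^*)$ is the order ideal of $\tau$. Further killing $\m$ (which, for the conclusion to make sense, one should take to be maximal), the full quotient becomes $R/(\m + I_\tau)$, and this is a nonzero field precisely when $\phi(\tau) \in \m$ for every $\phi \in P^*$.

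Finally, I would interpret this last condition via projectivity once more: the canonical map $P^* \to (P/\m P)^*$ (the latter dual taken over the field $R/\m$) is surjective because $P$ is projective, and $P/\m P$ is a free $R/\m$-vector space. So $\phi(\tau) \in \m$ for all $\phi \in P^*$ is equivalent to $\bar\tau = 0$ in $P/\m P \cong P_\m/\m P_\m$, equivalently $\tau \in \m P_\m$. The main step requiring care is the correct identification of $J$, which rests on the functoriality of $\Sym_R(-)$ applied to the dualized split exact sequence; the rest of the argument is a clean unwinding of the resulting presentation.
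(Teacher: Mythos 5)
Your proof is correct and follows a genuinely different route from the paper's. You stay over $R$ as long as possible: you present $\Sym_R(Q^*) \cong R[x_1,\ldots,x_n]/J$ with $J$ generated by the linear forms $\sum_i \phi(\eta_i)x_i$ as $\phi$ ranges over $P^*$, substitute $x_i \mapsto r_i$ to obtain $R/I_\tau$, where $I_\tau = (\phi(\tau) : \phi \in P^*)$ is the order ideal of $\tau := r_1\eta_1+\cdots+r_n\eta_n$, and only then reduce modulo $\m$ to get $R/(\m + I_\tau)$; the field criterion becomes $I_\tau \subseteq \m$, and the surjectivity of $P^* \to (P/\m P)^*$ (a consequence of projectivity of $P$) converts this into $\bar\tau = 0$ in $P/\m P$. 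The paper instead reduces modulo $\m$ at the outset: it sets $\kappa := R/\m$, chooses a $\kappa$-basis of $P/\m P$, writes the surjection $\kappa^n \to P/\m P$ as a matrix $(a_{ij})$, determines by a direct linear-algebra computation which $(x_1-r_1,\ldots,x_n-r_n)$ contain the defining ideal of $\Sym_\kappa((Q/\m Q)^*)$, and then passes back to $\Sym_R(Q^*)$ via the base-change isomorphism $\Sym_R(Q^*)\otimes_R \kappa \cong \Sym_\kappa((Q/\m Q)^*)$ (which again uses projectivity, this time of $Q$). Your route is more coordinate-free and dispenses with the matrix bookkeeping and the base-change identification; the paper's is more hands-on and keeps the matrix data explicit. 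Both use projectivity at the decisive step, just in slightly different ways. One minor but valid point that you flag: $\m$ should be assumed maximal, as the statement's ``only if'' direction fails for a non-maximal prime $\m$ (if $\tau \in \m P_\m$ then $R/(\m + I_\tau) = R/\m$, which is not a field), and indeed the paper's own proof sets $\kappa = R/\m$ and only ever applies the lemma at maximal ideals.
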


\begin{proof} Checking that $r_1\eta_1+\cdots+ r_n\eta_n\in \m P_\m$ is equivalent to checking that $r_1\eta_1+\cdots+ r_n\eta_n=0$ in $P_\m/\m P_\m\cong P/\m P$. Let $\kappa := R/\m$. As $P$ is projective, tensoring with $\kappa$ gives a short exact sequence $0\rightarrow Q/\m Q \rightarrow \kappa^n\rightarrow P/\m P \rightarrow 0$ of $\kappa$-vector spaces. Let $t=\dim_{\kappa}(P/\m P)$, and choose a basis for $P /\m P \cong \kappa^t$. Let $(a_{ij})$ be the $t \times n$ matrix representing the onto map $\kappa^n\rightarrow P/\m P$. After dualizing, we see that $\Sym_{\kappa}((Q/\m Q)^*)\cong \kappa[x_1,\ldots,x_n]/J$ where $J=(\sum_{i=1}^na_{ji}x_i\mid 1\ls j\ls t)$. Furthermore, observe that the maximal ideal $(x_1-r_1,\ldots,x_n-r_n) \in \Max(\kappa[x_1,\ldots,x_n])$ contains $J$ if and only if $\sum_{i=1}^na_{ji}r_i=0$ for each $1\ls j\ls t$. This happens if and only if $(r_1,\ldots,r_n)(a_{ij})^T=0$, which in turn is equivalent to $(a_{ij})(r_1,\ldots,r_n)^T=0$. Finally, since $(a_{ij})$ is the matrix representing $\kappa^n \to P/\m P$, this happens if and only if $r_1\eta_1+\cdots +r_n\eta_n=0$ in $P/\m P$. Therefore, $r_1 \eta_1 + \ldots + r_n \eta_n \in \m P_\m$ if and only if $(x_1-r_1,\ldots,x_n-r_n) \in \Max(\Sym_{\kappa}(Q/\m Q)^*) = \Max(\Sym_R(Q^*) \otimes R/\m)$, if and only if $(\m,x_1-r_1,\ldots,x_n-r_n) \in \Max(\Sym_{R}(Q^*))$.
\end{proof}

Using Lemma~\ref{Lemma Max(Sym) VS MinGens}, we can give a proof of Serre's Splitting Theorem~\ref{Serre's Theorem on Projective Modules} \cite[Theorem ~1]{Serre} for algebras of finite type over an algebraically closed field. 
The main purpose of showing this argument here is for the reader to get more familiar with the techniques that will be employed and generalized later in this section. We thank Mohan Kumar for suggesting this proof to us, and for pointing out how to generalize it to any finitely generated module in the case when $R$ is an affine algebra over an algebraically closed field.

\begin{Theorem} \label{Serre alg closed} Let $R$ be a $d$-dimensional ring of finite type over an algebraically closed field $k$, and $P$ be a finitely generated projective $R$-module whose rank at each localization at a prime ideal is at least $d+1$. Then $P$ contains a free $R$-summand.
\end{Theorem}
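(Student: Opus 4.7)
The plan is to use Lemma~\ref{Lemma Max(Sym) VS MinGens} to convert the existence of a free summand of $P$ into a dimension-theoretic condition on $\Sym_R(Q^*)$, and then to combine the Huneke--Rossi formula (Theorem~\ref{Huneke Rossi}) with the fact that $k=\bar k$ to produce a unimodular element. The underlying reason the argument suffices for projective modules is that any $p\in P$ which is a minimal generator of $P_\m$ at every $\m\in\Max(R)$ automatically generates a free summand: by Nakayama $p$ extends to a basis of the free module $P_\m$ at each such $\m$, so $P/Rp$ is finitely generated and locally free, hence projective, and the sequence $0\to R\to P\to P/Rp\to 0$ splits.

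First, I would fix a generating set $\eta_1,\ldots,\eta_n$ of $P$ and let $Q=\ker(R^n\twoheadrightarrow P)$, which is again finitely generated and projective. By Lemma~\ref{Lemma Max(Sym) VS MinGens}, the element $p=r_1\eta_1+\cdots+r_n\eta_n$ is a minimal generator of $P_\m$ for every $\m\in\Max(R)$ precisely when no closed point of $\Spec(\Sym_R(Q^*))$ maps to $(r_1,\ldots,r_n)$ under the morphism
\[
\pi\colon \Spec(\Sym_R(Q^*))\longrightarrow \A^n_k
\]
induced by the inclusion $k[x_1,\ldots,x_n]\hookrightarrow \Sym_R(Q^*)$. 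Since $k=\bar k$ and both schemes are of finite type over $k$, Hilbert's Nullstellensatz identifies closed points with $k$-rational points on both sides; moreover any nonempty fiber of $\pi$ over a $k$-point automatically contains a closed point of the source, so it suffices to avoid the \emph{scheme-theoretic} image of $\pi$ at the level of $k$-points.

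Next, I would bound $\dim\Sym_R(Q^*)$. Because $Q$ is projective of rank $n-\rank(P_\p)\le n-d-1$ at every $\p\in\Spec(R)$, the dual $Q^*$ is locally free of the same rank, so $\mu(Q^*_\p)\le n-d-1$. Combining this with $\dim(R/\p)\le d$ and applying Theorem~\ref{Huneke Rossi} yields
\[
\dim\Sym_R(Q^*)\;\le\; d+(n-d-1)\;=\;n-1.
\]

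Finally, the Zariski closure of $\pi(\Spec(\Sym_R(Q^*)))$ is a proper closed subset of $\A^n_k$ of dimension at most $n-1$, so its $k$-points form a proper subset of $k^n$ (using that $k$ is algebraically closed, hence infinite). Any $(r_1,\ldots,r_n)\in k^n$ outside this proper subset produces an element $p=\sum r_i\eta_i$ which is a minimal generator of $P_\m$ at every maximal ideal, and therefore generates a free summand of $P$ by the opening observation. The main obstacles are the dimension bound on $\Sym_R(Q^*)$, which is where the hypothesis $\rank(P_\p)\ge d+1$ is consumed, and the ``avoid a proper subvariety'' step at the end, which is precisely what breaks down for rings without enough rational points and which motivates the more refined approach via free-basic elements in Section~\ref{Free-Basic Section}.
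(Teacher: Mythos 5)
Your proposal is correct and follows essentially the same route as the paper: bound $\dim\Sym_R(Q^*)\le n-1$ via Huneke--Rossi together with the rank hypothesis, observe that the kernel $I$ of $k[x_1,\ldots,x_n]\to\Sym_R(Q^*)$ is therefore nonzero (equivalently, the closure of the image of $\pi$ is a proper closed subset of $\A^n_k$), use $k=\bar k$ and the Nullstellensatz to pick a $k$-rational point avoiding it, and apply Lemma~\ref{Lemma Max(Sym) VS MinGens}. Your opening paragraph (locally unimodular implies globally a free summand via projectivity of $P/Rp$) is a slight repackaging of the paper's closing observation that the map $R\to P$ splits locally, hence globally.
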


\begin{proof} Let $\eta_1,\ldots,\eta_n$ be generators of $P$ and $Q$ be the kernel of the natural surjection $R^n\rightarrow P$. Then $\ds\dim(\Sym_R(Q^*))=\max_{\p\in\Spec(R)}\{\dim(R/\p)+\mu((Q^*)_\p)\}$ by Theorem~\ref{Huneke Rossi}. Observe that $Q$ is a locally free $R$-module of rank equal to $n-\rank(P)$. Therefore $Q^*$ is locally free of the same rank, and thus $\dim(\Sym_R(Q^*))\ls d+n-d-1=n-1$. Let $I$ be the ideal of $k[x_1,\ldots,x_n]$ defining the kernel of the composition $k[x_1,\ldots,x_n]\subseteq R[x_1,\ldots,x_n]\rightarrow \Sym_R(Q^*)$. As $\dim(k[x_1,\ldots,x_n])=n>\dim(\Sym_R(Q^*))$ it must be the case that $I\not=0$ and, by the Nullstellensatz, there must be a maximal ideal $(x_1-r_1,\ldots,x_n-r_n)$ of $k[x_1,\ldots,x_n]$ not containing $I$. In particular, for each $\m\in \Max(R)$, the ideal $(\m,x_1-r_1,\ldots,x_n-r_n)$ cannot be a maximal ideal of $\Sym_R(Q^*)$. By Lemma~\ref{Lemma Max(Sym) VS MinGens}, $r_1\eta_1+\ldots+r_n\eta_n$ is then a minimal generator of $P_\m$ for each $\m\in \Max(R)$. Therefore, the map $R\rightarrow P$ sending $1\mapsto r_1\eta_1+\cdots+r_n\eta_n$ splits locally at each $\m \in \Max(R)$, and hence it splits globally.  
\end{proof}

We now present a generalization of the techniques just employed. This allows us to prove Theorem~\ref{Serre's Theorem Intro} for rings $R$ of which all the residue fields $R/\m$, for $\m \in \Max(R)$, are infinite. This includes rings that contain an infinite field. We first prove an auxiliary Lemma.

\begin{Lemma} \label{Lemma increase height} Let $R$ be a commutative Noetherian ring of finite dimension $d$ such that all residue fields $R/\m$, for $\m \in \Max(R)$, are infinite. Let $R[x_1,\ldots,x_n]$ be a polynomial algebra over $R$, and let $J \subseteq R[x_1,\ldots,x_n]$ be an ideal with $\height(J) \gs d+1$. For all integers $0 \ls \ell \ls n$, there exist $r_1,\ldots,r_\ell \in R$ such that either $J + (x_1-r_1,\ldots,x_\ell-r_\ell) = R[x_1,\ldots,x_n]$, or $\height(J + (x_1-r_1,\ldots,x_\ell-r_\ell)) \gs d+\ell +1$ (when $\ell = 0$, the reader should think of $J+(x_1-r_1,\ldots,x_\ell-r_\ell)$ as being equal to $J$).
\end{Lemma}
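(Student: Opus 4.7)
The plan is to proceed by induction on $\ell$. The base case $\ell = 0$ is the hypothesis $\height(J) \gs d+1$. For the inductive step, suppose $r_1, \ldots, r_{\ell-1} \in R$ have been chosen so that either $J' := J + (x_1 - r_1, \ldots, x_{\ell-1} - r_{\ell-1}) = R[x_1,\ldots,x_n]$ (in which case any $r_\ell$ completes the argument) or $\height(J') \gs d+\ell$. Assume the latter. Let $\p_1, \ldots, \p_s$ be the minimal primes of $J'$ of height \emph{exactly} $d+\ell$. The strategy is to pick $r_\ell \in R$ so that $x_\ell - r_\ell \notin \p_i$ for every $i$. Once this is achieved, any minimal prime $\p$ of $J' + (x_\ell - r_\ell)$ must contain some minimal prime of $J'$: if the containment is strict then $\height(\p)$ exceeds the height of the contained prime by at least one, giving $\gs d+\ell+1$; and if $\p$ equals a minimal prime $\p_i$ of $J'$, then $x_\ell - r_\ell \in \p_i$ forces $\p_i \notin \{\p_1, \ldots, \p_s\}$, so $\height(\p_i) > d+\ell$.

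To find such an $r_\ell$, set $\q_i := \p_i \cap R$ for $i = 1, \ldots, s$. The forbidden set $B_i := \{r \in R : x_\ell - r \in \p_i\}$ is either empty (when the class of $x_\ell$ in the domain $A/\p_i$ is not in the image of $R$) or a single coset $r_{0,i} + \q_i$ in $R$. So choosing $r_\ell$ reduces to avoiding finitely many cosets of proper prime ideals. The heart of the argument — and the only place where the infinite-residue-field hypothesis is genuinely used — is therefore the following covering claim: \emph{a commutative Noetherian ring with all residue fields at maximal ideals infinite cannot be written as a finite union of cosets of proper ideals.}

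My plan for this covering claim: starting from a supposed cover $R = \bigcup_{i=1}^s (r_i + I_i)$ with each $I_i \subsetneq R$, replace each $I_i$ by any maximal ideal $\m_i \supseteq I_i$ (which only enlarges the cosets and keeps the cover), and group the resulting cosets by the distinct maximal ideals $\m^{(1)}, \ldots, \m^{(u)}$ that occur. The Chinese Remainder Theorem gives a surjection $\phi \colon R \twoheadrightarrow K := \prod_{j=1}^u R/\m^{(j)}$, under which each coset of $\m^{(j)}$ maps onto the $j$-th projection preimage $\pi_j^{-1}(\overline r)$ of a single element. The cover of $R$ thus descends to a cover of $K$ by finitely many sets $\pi_j^{-1}(T_j)$ with $T_j \subseteq R/\m^{(j)}$ finite. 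Picking $a_j \in R/\m^{(j)} \setminus T_j$ for each $j$ — possible precisely because each residue field is infinite — yields a tuple $(a_1, \ldots, a_u) \in K$ that lies outside every $\pi_j^{-1}(T_j)$, a contradiction.

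Granting the covering claim, an $r_\ell \in R \setminus (B_1 \cup \cdots \cup B_s)$ exists and the induction closes. The main obstacle is thus the covering claim, which cleanly isolates the use of the infinite-residue-field hypothesis; everything else is a standard ``height increases by one'' bookkeeping via minimal primes of $J'$.
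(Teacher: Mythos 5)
Your proposal is correct and follows essentially the same route as the paper: induction on $\ell$, avoiding $x_\ell - r_\ell$ falling into the minimal primes of $J'$ by dodging finitely many cosets of primes in $R$, using CRT to reduce to residue fields at maximal ideals, and invoking their infinitude. The only (minor, valid) differences are that you restrict attention to minimal primes of $J'$ of height exactly $d+\ell$ rather than all of them, and you isolate the coset-covering statement as a standalone claim, but neither changes the underlying argument.
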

\begin{proof} We proceed by induction on $\ell$, where the case $\ell=0$ follows from our assumptions. Let $1 \ls \ell \ls n$. By inductive hypothesis, we can find $r_1,\ldots,r_{\ell-1}$ such that either $J+(x_1-r_1,\ldots,x_{\ell-1}-r_{\ell-1}) = R[x_1,\ldots,x_n]$, or $\height(J+(x_1-r_1,\ldots,x_{\ell-1}-r_{\ell-1})) \gs d+\ell$. In the first case, any choice of $r_\ell$ will yield $J+(x_1-r_1,\ldots,x_\ell-r_\ell) = R[x_1,\ldots,x_n]$. In the second case, we have that $J+(x_1-r_1,\ldots,x_{\ell-1}-r_{\ell-1})$ is a proper ideal of height at least $d+\ell$. Let $\p_1,\ldots,\p_t$ be the minimal primes over $J+(x_1-r_1,\ldots,x_{\ell-1}-r_{\ell-1})$. For each $1 \ls j \ls t$, let $\q_j = \p_j \cap R$, and let $H_j := \{r \in R \mid x_\ell - r \in \p_j\}$. We want to show that $\bigcup_{j=1}^t H_j \ne R$. For this matter, from now on we can deal only with those indices $1\ls j \ls t$ for which $H_j \ne \emptyset$. In this case, we see that $H_j = a_j + \q_j$, i.e., $H_j$ is a coset determined by some $a_j \in R$. For each $\q_j$, choose a maximal ideal $\m_j$ of $R$ such that $\m_j$ contains $\q_j$. We then have a finite set $Y = \{\m_j \in \Max(R) \mid H_j \ne \emptyset\}$ of maximal ideals of $R$. For each $\m \in Y$, since $R/\m$ is infinite, there exists $b_\m \in R$ such that $b_\m + \m \notin \{a_j + \m_j \mid \m_j = \m, 1 \ls j \ls t\}$. By the Chinese Remainder Theorem, there exists $r_\ell \in R$ such that $r_\ell + \m = b_\m + \m$ for all $\m \in Y$. In particular, $r_\ell$ avoids all cosets
$H_j$, for $1 \ls j \ls t$. By definition of $H_j$, it follows that $x_\ell-r_\ell \notin \bigcup_{j=1}^t \p_j$, and this implies either that $J+(x_1-r_1,\ldots,x_\ell-r_\ell) = R[x_1,\ldots,x_n]$, or that $\height(J+(x_1-r_1,\ldots,x_\ell-r_\ell)) \gs d+\ell+1$, as desired.
\end{proof}

\begin{Remark} In Lemma~\ref{Lemma increase height}, if $R$ contains an infinite field $k$, we can actually choose $r_1, \ldots , r_\ell$ to be any generic elements inside $k$. In fact, given any proper ideal $I$ of $R[x_1,\ldots,x_n]$ and any fixed $f \in R[x_1,\ldots,x_n]$, no two distinct $r, s \in k \subseteq R$ can be such that both $f-r \in I$ and $f - s \in I$, otherwise the unit $r-s$ would be inside $I$. Therefore, with the same notation as in the inductive step in the proof of the Lemma, a generic choice $r_\ell \in k \subseteq R$ is such that $x_\ell-r_\ell$ avoids all the minimal primes $\p_1,\ldots,\p_t$ over $J+(x_1-r_1,\ldots,x_{\ell-1}-r_{\ell-1})$. 
\end{Remark}

We are now ready to present the main result of this section: Theorem~\ref{Serre's Theorem Intro} in the case when $R$ has infinite residue fields. As already pointed out in the introduction, we will prove Theorem~\ref{Serre's Theorem Intro} in its full generality in Section~\ref{Free-Basic Section}, using different methods.

\begin{Theorem} \label{Serre for infinite fields}
Let $R$ be a commutative Noetherian ring of dimension $d$ such that all residue fields $R/\m$, where $\m \in \Max(R)$, are infinite. Let $M$ be a finitely generated $R$-module such that $M_\p$ has a free summand of rank at least $d+1$ locally at every $\p \in \Spec(R)$. Then $M$ has a free summand.
\end{Theorem}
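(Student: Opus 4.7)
The plan is to adapt the proof of Theorem~\ref{Serre alg closed} by replacing $\Sym_R(Q^*)$, which is specific to the projective case, with $\Sym_R(R^n/N)$, where $\eta_1,\ldots,\eta_n$ generate $M$ and $N:=\Hom_R(M,R)$ is embedded into $R^n$ via $\phi\mapsto(\phi(\eta_1),\ldots,\phi(\eta_n))$. Since $R/\m$ is only assumed infinite, the Nullstellensatz step is replaced by an iterated application of Lemma~\ref{Lemma increase height}. The first thing I would verify is an analogue of Lemma~\ref{Lemma Max(Sym) VS MinGens} in this setting: for any $r_1,\ldots,r_n\in R$ and $\m\in\Max(R)$, the ideal $(\m,x_1-r_1,\ldots,x_n-r_n)$ is maximal in $\Sym_R(R^n/N)$ if and only if $\phi(v)\in\m$ for every $\phi\in N$, where $v:=\sum r_i\eta_i$. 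Because $M$ is finitely presented, $v$ generates a free $R$-summand of $M$ if and only if the evaluation map $N\to R$, $\phi\mapsto\phi(v)$, is surjective, and this surjectivity can be detected locally at every maximal ideal. Hence it suffices to produce $r_1,\ldots,r_n$ such that $I+(x_1-r_1,\ldots,x_n-r_n)=R[x_1,\ldots,x_n]$, where $I\subseteq R[x_1,\ldots,x_n]$ denotes the kernel of the natural surjection onto $\Sym_R(R^n/N)$.

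The main obstacle is to establish $\height(I)\gs d+1$, so that Lemma~\ref{Lemma increase height} becomes applicable. Given a minimal prime $\mathfrak{P}$ over $I$ with $\p:=\mathfrak{P}\cap R$, the dimension formula for the flat local extension $R_\p\to R[x_1,\ldots,x_n]_\mathfrak{P}$ yields $\height(\mathfrak{P})=\height(\p)+h$, where $h$ is the height of $\mathfrak{P}/\p R[x]$ viewed as a prime of $k(\p)[x_1,\ldots,x_n]$. Since $I/\p R[x]$ is a linear ideal in $k(\p)[x_1,\ldots,x_n]$ generated by the images in $k(\p)^n$ of elements of $N$, the quantity $h$ is at least the $k(\p)$-dimension $e_\p$ of this image. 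The hypothesis $M_\p\cong R_\p^{t_\p}\oplus X_\p$ with $t_\p\gs d+1$ provides a surjection $R_\p^n\twoheadrightarrow R_\p^{t_\p}$, obtained by composing the canonical map $R_\p^n\twoheadrightarrow M_\p$ with the projection onto the free summand; dualizing produces $t_\p$ elements of $N_\p$ whose reductions in $k(\p)^n$ are $k(\p)$-linearly independent, because the surjection $R_\p^n\twoheadrightarrow R_\p^{t_\p}$ remains surjective after reduction modulo $\p$. Consequently $e_\p\gs t_\p\gs d+1$, and therefore $\height(\mathfrak{P})\gs d+1$ for every minimal prime over $I$.

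Finally, applying Lemma~\ref{Lemma increase height} with $J=I$ and $\ell=n$ yields $r_1,\ldots,r_n\in R$ such that either $I+(x_1-r_1,\ldots,x_n-r_n)=R[x_1,\ldots,x_n]$, or this ideal has height at least $d+n+1$; since $\dim R[x_1,\ldots,x_n]=d+n$, the latter still forces the ideal to be the whole ring. The equivalence from the first paragraph then furnishes $\phi\in N$ with $\phi(v)=1$ for $v:=\sum r_i\eta_i$, which exhibits $Rv$ as the desired free $R$-summand of $M$.
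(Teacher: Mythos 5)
Your proposal is correct, and it takes a genuinely different route from the paper's. The paper handles the passage from projective to general modules by covering $\Spec(R)$ with basic opens $D(s_1),\ldots,D(s_t)$ on which $M$ has a rank-$(d+1)$ free summand, forming the symmetric algebra $\Sym_{R_{s_i}}(Q_i^*)$ of a \emph{projective} kernel on each piece, taking the contractions $\a_i\subseteq R[x_1,\ldots,x_n]$ (identified with saturations $I_i:s_i^\infty$, each of height $\gs d+1$), and finally feeding the intersection $\a=\bigcap_i\a_i$ into Lemma~\ref{Lemma increase height}. You replace all of this with a single global object: the defining ideal $I$ of $\Sym_R(R^n/N)$ with $N=\Hom_R(M,R)\hookrightarrow R^n$ via $\phi\mapsto(\phi(\eta_1),\ldots,\phi(\eta_n))$. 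In the projective case $R^n/N\cong Q^*$, so this genuinely specializes to the paper's construction, and your analogue of Lemma~\ref{Lemma Max(Sym) VS MinGens} correctly translates ``$(\m,x_1-r_1,\ldots,x_n-r_n)$ is maximal in $\Sym_R(R^n/N)$'' into ``$\phi(v)\in\m$ for all $\phi\in N$,'' which (via the identification $\Hom_R(M,R)_\m\cong\Hom_{R_\m}(M_\m,R_\m)$) detects failure of $v$ to split off a free rank-one summand of $M_\m$. The height bound $\height(I)\gs d+1$ is then obtained fiberwise: the dimension formula for the flat local extension $R_\p\to R[x]_{\mathfrak P}$ gives $\height(\mathfrak P)=\height(\p)+h$ with $h$ bounded below by the $k(\p)$-dimension of the image of $N$ in $k(\p)^n$, and the splitting hypothesis supplies $t_\p\gs d+1$ linearly independent vectors in that image (the rows of the matrix of $R_\p^n\twoheadrightarrow R_\p^{t_\p}$ reduced mod $\p$). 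What your argument buys is a cleaner, cover-free formulation; what the paper's buys is a more self-contained route that keeps leaning on the projective Lemma~\ref{Lemma Max(Sym) VS MinGens} verbatim rather than re-proving a variant for $\Hom_R(M,R)$. Both feed into Lemma~\ref{Lemma increase height} with $\ell=n$ in exactly the same way. One small wording nit: $e_\p$ should be described as the $k(\p)$-dimension of the span in $k(\p)^n$ of the images of elements of $N$ (equivalently of $N_\p$) rather than ``the image of $I/\p R[x]$,'' but this does not affect the argument.
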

\begin{proof} Suppose that $M$ can be generated by $n$ elements, and fix a surjective map $\pi: R^n \to M$. Our assumptions about local number of free summands of $M$ guarantee that we can cover $\Spec(R)$ with open sets $D(s_1), \ldots, D(s_t)$, such that for all $1 \ls i \ls t$ the module $M_{s_i}$ has an $R_{s_i}$-free summand of rank at least $d+1$. In particular, for all $i$, we can find maps $M_{s_i} \to R^{d+1}_{s_i}$ such that the compositions $R^n_{s_i} \to M_{s_i} \to R^{d+1}_{s_i}$ are split surjections. Let $Q_i$ be the kernel of $R^n_{s_i} \to R^{d+1}_{s_i} \to 0$, so that we have a surjection $R_{s_i}[x_1,\ldots,x_n] \to \Sym_{R_{s_i}}(Q_i^*) \to 0$ at the level of symmetric algebras. For each $i$, consider the composition $R[x_1,\ldots,x_n] \to R_{s_i}[x_1,\ldots,x_n] \to \Sym_{R_{s_i}}(Q_i^*)$, and let $\a_i \subseteq R[x_1,\ldots,x_n]$ be its kernel. We claim that $\a_i$ is an ideal of height at least $d+1$. In fact, let $\Sym_{R_{s_i}}(Q_i^*) \cong R_{s_i}[x_1,\ldots,x_n]/J_i$ be a presentation of the symmetric algebra of $Q_i^*$ over $R_{s_i}$. Then $J_i$ can be generated by $d+1$ linear forms that, possibly after a change of variables, can be regarded as $d+1$ distinct variables. Therefore $J_i$ has height $d+1$ in $R_{s_i}[x_1,\ldots,x_n]$.
Let $I_i \subseteq R[x_1,\ldots,x_n]$ be any ideal such that $(I_i)_{s_i} = J_i$. Note that the ideal $\a_i$ is just the saturation $I_i : s_i^{\infty} = \{f \in R[x_1,\ldots,x_n] \mid s_i^mf \in I_i \mbox{ for some } m \in \N\}$,
and this implies that $\height(\a_i) \gs \height(J_i) = d+1$, since no minimal primes of $\a_i$ contain $s_i$.

Now consider the intersection $\a =\a_1 \cap \ldots \cap \a_n$, which is then an ideal of height at least $d+1$ inside $R[x_1,\ldots,x_n]$. Applying Lemma~\ref{Lemma increase height} to the ideal $\a$ with $\ell=n$, we obtain elements $r_1,\ldots,r_n \in R$ such that either $\a+(x_1-r_1,\ldots,x_n-r_n) = R[x_1,\ldots,x_n]$, or $\height(\a+(x_1-r_1,\ldots,x_n-r_n)) \gs d+n+1$. Since $\dim(R[x_1,\ldots,x_n]) = d+n$, the latter cannot happen. This implies that the ideals $\a_i+(x_1-r_1,\ldots,x_n-r_n)$ cannot be contained in $(\m,x_1-r_1,\ldots,x_n-r_n)$, for any $1 \ls i \ls t$ and any $\m \in \Max(R) \cap D(s_i)$. 
As a consequence, $(\m,x_1-r_1,\ldots,x_n-r_n)$ is not a maximal ideal of $\Sym_{R_{s_i}}(Q_i^*)$ for any $\m \in \Max(R) \cap D(s_i)$, for $1\ls i \ls t$. Now, Lemma~\ref{Lemma Max(Sym) VS MinGens} implies that the the image of $(r_1,\ldots,r_n)$ under $R^n \to M$ generates a non-trivial free $R_{s_i}$-summand locally on $D(s_i)$, for each $i$. Since the open sets $D(s_i)$ cover $\Spec(R)$, it generates a global free summand of $M$.
\end{proof}

\section{Free-basic elements}\label{Free-Basic Section}

Throughout this section, $R$ denotes a commutative Noetherian ring with identity. The purpose of this section is to prove Theorem~\ref{Serre's Theorem Intro} and Theorem~\ref{Bass's Theorem Intro} in full generality. We first recall the notion of a basic set. We refer the reader to \cite{EvansGriffith} for a more general and detailed treatment of basic sets and basic elements. We also want to mention \cite{HunekeNotesCA3} as a possible source for references.
\begin{Definition} A subset $X\subseteq \Spec(R)$ is said to be basic if it is closed under intersections. In other words, for any indexing set $\Lambda$ and any family $\{\p_\alpha\}_{\alpha \in \Lambda} \subseteq X$, whenever the intersection $I := \bigcap_{\alpha\in\Lambda}\p_\alpha$ is a prime ideal, then $I$ is still an element of $X$.
\end{Definition} 
\begin{Examples} Let $R$ be a commutative Noetherian ring.
\begin{enumerate}
\item $\Spec(R)$ is trivially a basic set.
\item $\Min(R)$ is a basic set. More generally, for an integer $n \in \N$, the set $X^{(n)} = \{\p \in \Spec(R) \mid \height(\p) \ls n\}$ is basic.
\item The set $j$-$\Spec(R) = \{ \p \in \Spec(R) \mid \p$ is an intersection of maximal ideals of $R\}$ is basic.
\end{enumerate}
\end{Examples}
Below is a list of properties about basic sets, which we leave as an exercise to the reader.

\begin{Proposition}\label{Properties about Basic Sets} Let $R$ be a commutative Noetherian ring, and $X$ be a basic set.
\begin{enumerate}
\item If $Y\subseteq X$ is closed, then $Y$ is a basic set.
\item Every closed set $Y\subseteq X$ is a finite union of irreducible closed sets in $X$. 
\item If $Y\subseteq X$ is an irreducible closed set then $Y=V(\p)\cap X$ for some $\p\in X$, i.e., $Y$ has a generic point.
\end{enumerate}
\end{Proposition}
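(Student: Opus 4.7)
The plan is to handle the three parts as three short arguments, each reducing a standard fact about the Zariski topology to the intersection-closure property that defines a basic set. Throughout I will use that a closed subset of $X$, in the subspace topology, is exactly a set of the form $V(I)\cap X$ for some ideal $I\subseteq R$.

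For part~(1), I would write the closed subset $Y\subseteq X$ as $Y=V(I)\cap X$. Given a family $\{\p_\alpha\}_{\alpha\in\Lambda}\subseteq Y$ whose intersection $J=\bigcap_\alpha \p_\alpha$ is prime, I note that $J\supseteq I$ because each $\p_\alpha\supseteq I$, and $J\in X$ because $X$ is basic and $\{\p_\alpha\}\subseteq X$. Hence $J\in V(I)\cap X=Y$, which is precisely what is required for $Y$ to be basic.

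For part~(2), I would invoke the standard fact that a subspace of a Noetherian topological space is Noetherian. Since $R$ is Noetherian, $\Spec(R)$ is a Noetherian topological space, and hence so is $X$ with its subspace topology. The usual descending-chain argument, applied inside $X$, then shows that every closed subset $Y\subseteq X$ decomposes as a finite union of irreducible closed subsets of $X$.

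For part~(3), I would take $Y\subseteq X$ irreducible closed and form $Z=\overline{Y}$, the closure of $Y$ inside $\Spec(R)$. Since the closure of an irreducible subspace is irreducible, $Z$ is an irreducible closed subset of $\Spec(R)$, so $Z=V(\p)$ for a unique prime $\p$. I would then identify $\p=\bigcap_{\q\in Y}\q$: the inclusion $\p\subseteq\bigcap_{\q\in Y}\q$ is immediate from $Y\subseteq V(\p)$, and the reverse inclusion follows from the closure formula $\overline{Y}=V\bigl(\bigcap_{\q\in Y}\q\bigr)$, which forces $\sqrt{\bigcap_{\q\in Y}\q}=\p$, hence $\bigcap_{\q\in Y}\q\subseteq\p$. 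Since $\p$ is realized as a prime intersection of elements of $X$ and $X$ is basic, we conclude $\p\in X$. Finally, writing $Y=Z'\cap X$ for some closed $Z'\subseteq\Spec(R)$, the inclusions $Y\subseteq Z'$ and $V(\p)=\overline{Y}\subseteq Z'$ give $V(\p)\cap X\subseteq Z'\cap X=Y$, while $Y\subseteq V(\p)\cap X$ is clear; therefore $Y=V(\p)\cap X$ as claimed.

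The main obstacle is part~(3): parts~(1) and~(2) are essentially formal, but in~(3) one must actually produce the generic point $\p$ as an element of $X$, and this is the only place the basic-set hypothesis is genuinely used, via the identification of $\p$ with the prime intersection $\bigcap_{\q\in Y}\q$.
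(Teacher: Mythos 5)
Your proof is correct in all three parts. The paper explicitly leaves this proposition as an exercise and gives no proof of its own, so there is nothing to compare against; but your arguments are the natural ones. Part (1) reduces to unwinding the definitions, part (2) is the standard Noetherian-topological-space argument applied to the subspace $X$, and in part (3) the key observation — that the generic point $\p$ of the closure $\overline{Y}$ in $\Spec(R)$ equals the prime intersection $\bigcap_{\q\in Y}\q$, and hence lies in $X$ by the basic-set hypothesis — is exactly the one place where the defining property of basic sets is needed, as you correctly identify. One small simplification in part (3): since $Y$ is closed in $X$, one has $Y = \overline{Y}\cap X$ directly (no need to introduce an auxiliary closed set $Z'$), giving $Y = V(\p)\cap X$ immediately once $\p\in X$ is established.
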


Let $X$ be a basic set, and $M$ be a finitely generated $R$-module. An element $x \in M$ is called $X$-basic if $x \notin \p M_\p$, for all $\p\in X$. Equivalently, $x$ is a minimal generator of $M_\p$, for all $\p \in X$. The notion of basic element was introduced by Swan in \cite{Swan}, and later used by Eisenbud and Evans for proving Theorem~\ref{Serre's Theorem on Projective Modules} and Theorem~\ref{Bass's Cancellation Theorem}. When $P$ is a projective $R$-module, any $X$-basic element $x \in P$ produces a free summand $\lr{x}_\p$ of $M_\p$, for all $\p \in X$. In particular, when $X=\Spec(R)$ or just $X=j$-$\Spec(R)$, and $P$ is projective, any $X$-basic element produces a free summand $\lr{x}$ of $P$. In fact, in such case, the inclusion $\lr{x} \subseteq P$ splits for all $\m \in \Max(R) \subseteq X$, hence it splits globally. When $M$ is not necessarily projective, an $X$-basic element $x \in M$ may not generate a free summand, in general. In what follows, given an $R$-module $M$ and an element $x \in M$, we will denote $\lr{x}:= (R \cdot x)$. Similarly, given a subset $S \subseteq M$, we will denote $\lr{S}:=(R \cdot S)$. We now introduce an invariant that keeps track of the size of the local free summands of a module, rather than its local number of generators, as in the theory of basic elements. Some of the arguments that we present have a similar flavor as the treatment given for basic elements in \cite{HunekeNotesCA3}. 

\begin{Definition} \label{Definition delta}
Let $R$ be a commutative Noetherian ring, $M$ a finitely generated $R$-module, and $S$ a subset of $M$. For a prime $\p\in\Spec(R)$, we denote by $\delta_\p(S,M)$ the largest integer $n$ satisfying one of the following equivalent conditions:

\begin{enumerate}
\item There exists an $R_\p$-free direct summand $F$ of $M_\p$ such that the image of $S$ under the natural map $M_\p\rightarrow F\rightarrow F/\p F$ generates a $R_\p/\p R_\p$-vector subspace of rank $n$.
\item There exists an $R_\p$-free submodule $G$ of $\lr{S}_\p$ of rank $n$ such that $G$ splits off $M_\p$, i.e., there exists a surjective map $M_\p\rightarrow G$ such that the composition $G\subseteq \lr{S}_\p\subseteq M_\p\rightarrow G$ is the identity.
\end{enumerate} 
When the module $M$ is clear from the context, we will write $\delta_\p(S)$ for $\delta_\p(S,M)$.
\end{Definition}

Let $\p \in \Spec(R)$ be a prime, $M$ a finitely generated $R$-module, and $x \in M$. It follows from our definition that $\delta_\p(\{x\}) = 1$ if and only if $\lr{x}_\p \cong R_\p$, and the natural inclusion $\lr{x}_\p \subseteq M_\p$ splits. Also, observe that if $\q \subseteq \p$, then $\delta_\q(S) \gs \delta_\p(S)$. In fact, if $G_\p \subseteq \lr{S}_\p$ is a free summand of $M_\p$ of rank $\delta_\p(S)$, then $G_\q \subseteq \lr{S}_\q$ is a free summand of $M_\q$ of rank $\delta_\p(S)$. Since $\delta_\q(S)$ is defined as the maximum rank of any such modules, we have $\delta_\q(S) \gs \delta_\p(S)$.
\begin{Lemma}\label{Closed Set} Let $R$ be a commutative Noetherian ring, $M$ a finitely generated $R$-module, and $X$ a basic set. For any subset $S$ of $M$ and any integer $t\in\N$, the set $Y_t:=\{\p\in X\mid \delta_\p(S)\ls t\}$ is closed.
\end{Lemma}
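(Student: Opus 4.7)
The strategy is to prove that the complement $\{\p \in X \mid \delta_\p(S) \geq t+1\}$ is open in $X$ with respect to the subspace topology inherited from $\Spec(R)$. For each $\p_0$ in this complement, I will produce an element $f \in R \setminus \p_0$ such that every $\q \in D(f) \cap X$ still satisfies $\delta_\q(S) \geq t+1$.

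The first ingredient is a convenient reformulation of condition (2) in Definition~\ref{Definition delta}: I plan to show that $\delta_\p(S) \geq n$ is equivalent to the existence of elements $s_1, \ldots, s_n \in \lr{S}_\p$ together with homomorphisms $\phi_1, \ldots, \phi_n \in \Hom_{R_\p}(M_\p, R_\p)$ satisfying $\phi_i(s_j) = \delta_{ij}$ for all $i, j$. Starting from a free summand $G \subseteq \lr{S}_\p$ of rank $n$ with basis $g_1, \ldots, g_n$, the $\phi_i$ are obtained by composing the splitting $M_\p \twoheadrightarrow G$ with the coordinate functionals. Conversely, given such dual pairs, the elements $s_i$ are forced to be $R_\p$-linearly independent (apply each $\phi_i$ to a relation), so they span a free submodule $G \subseteq \lr{S}_\p$ of rank $n$; the composition $M_\p \xrightarrow{(\phi_1, \ldots, \phi_n)} R_\p^n \xrightarrow{\sim} G$, where the last map sends $e_i$ to $s_i$, then splits the inclusion $G \hookrightarrow M_\p$.

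The second step is a standard spreading-out argument. Fix $\p_0 \in X$ with $\delta_{\p_0}(S) \geq t+1$, witnessed by such $s_1, \ldots, s_{t+1}$ and $\phi_1, \ldots, \phi_{t+1}$. Since $R$ is Noetherian and $M$ is finitely generated, hence finitely presented, the canonical map $\Hom_R(M, R)_{\p_0} \to \Hom_{R_{\p_0}}(M_{\p_0}, R_{\p_0})$ is an isomorphism, so each $\phi_i$ equals $\Phi_i/u_i$ for some $\Phi_i \in \Hom_R(M, R)$ and $u_i \notin \p_0$. Similarly, each $s_i$ lifts to an element of $\lr{S}$ after inverting finitely many elements outside $\p_0$. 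Multiplying all these denominators together, along with the finitely many further witnesses that each equality $\phi_i(s_j) = \delta_{ij}$ holds in $R_{\p_0}$, I obtain a single $f \in R \setminus \p_0$ over which the $s_i$, the $\phi_i$, and the relations $\phi_i(s_j) = \delta_{ij}$ all make sense and hold in $R_f$.

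For any $\q \in D(f) \cap X$, localizing further from $R_f$ to $R_\q$ preserves this dual-pair data, and the equivalent condition from the first paragraph yields $\delta_\q(S) \geq t+1$. Hence $D(f) \cap X$ is an open neighborhood of $\p_0$ contained in the complement of $Y_t$, so $Y_t$ is closed in $X$. The main (modest) obstacle is the passage from local Hom data to a denominator outside $\p_0$; everything else is routine denominator tracking. I remark that the basic-set hypothesis on $X$ is not actually needed here, since the argument shows that $\{\p \in \Spec(R) \mid \delta_\p(S) \leq t\}$ is closed in all of $\Spec(R)$.
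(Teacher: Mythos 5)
Your proposal is correct and follows essentially the same route as the paper: both prove that $\{\p \in \Spec(R) \mid \delta_\p(S) > t\}$ is open by lifting the local split-surjection data witnessing $\delta_{\p_0}(S) > t$ to a basic open neighborhood $D(f)$ of $\p_0$, using that $M$ is finitely presented so $\Hom$ commutes with localization. The paper phrases the spreading-out step via vanishing of the kernel and cokernel of a single lifted composition $R^{\delta} \to M \to R^{\delta}$, while you phrase it via the dual-pair relations $\phi_i(s_j)=\delta_{ij}$; these are equivalent formulations of the same idea, and your closing remark that the basic-set hypothesis is unnecessary matches the first line of the paper's own proof.
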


\begin{proof} Observe that $Y_t=\{\p\in \Spec(R)\mid \delta_\p(S)\ls t\}\cap X$, so it is enough to show that $\{\p\in \Spec(R)\mid \delta_\p(S)\ls t\}$ is a closed set, i.e., that $\{\p\in \Spec(R)\mid \delta_\p(S)> t\}$ is open. Let $\p \in \Spec(R)$ be such that $\delta_\p(S)> t$. By definition of $\delta_\p(S)$, the identity map $R^{\delta_\p(S)}_\p\rightarrow R^{\delta_\p(S)}_\p$ factors as  $R^{\delta_\p(S)}_\p\subseteq \lr{S}_\p\subseteq M_\p\rightarrow R^{\delta_\p(S)}_\p$. The inclusion $R^{\delta_\p(S)}_\p\subseteq \lr{S}_\p$ and the surjection $M_\p\rightarrow R^{\delta_\p(S)}_\p$ lift to maps $R^{\delta_\p(S)}\rightarrow \lr{S}$ and $M \rightarrow R^{\delta_\p(S)}$, respectively. Let $K$ be the kernel and $C$ be the cokernel of the composition $R^{\delta_\p(S)} \to \lr{S} \subseteq M \rightarrow R^{\delta_\p(S)}$. As $K_\p=C_\p=0$ and both these modules are finitely generated, there is an element $s\in R\smallsetminus \p$ such that $K_\q=C_\q$ for all $\q\in D(s)$. Thus $D(s)$ is an open neighborhood of $\p$ such that $\delta_\q(S)\gs \delta_\p(S)>t$ for all $\q\in D(s)$, which shows that $\{\p\in \Spec(R)\mid \delta_\p(S)> t\}$ is indeed open. \end{proof}

\begin{Lemma}\label{Crucial Lemma} Let $R$ be a commutative Noetherian ring, $M$ a finitely generated $R$-module, and $X$ a basic set. For any subset $S$ of $M$ there exists a finite set of primes $\Lambda\subseteq X$ such that if $\p\in X \smallsetminus \Lambda$, there exists $\q\in \Lambda$ such that $\q\subsetneq \p$, and $\delta_\p(S)=\delta_\q(S)$.
\end{Lemma}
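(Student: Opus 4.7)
The plan is to stratify $X$ by the values of $\delta_\p(S)$ and exploit the closedness result from Lemma~\ref{Closed Set}. First, I would observe that the values of $\delta_\p(S)$ are uniformly bounded in $\p$: any free summand of $M_\p$ contained in $\lr{S}_\p$ is in particular a quotient of $M_\p$, so $\delta_\p(S) \ls \mu(M_\p) \ls \mu(M)$. Consequently, $\delta_\p(S)$ takes only finitely many values, say $t_1 < t_2 < \cdots < t_k$, as $\p$ ranges over $X$.

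For each such value $t_i$, I would consider the closed subset $Y_{t_i} = \{\p \in X \mid \delta_\p(S) \ls t_i\}$ of $X$ provided by Lemma~\ref{Closed Set}. By Proposition~\ref{Properties about Basic Sets}, each $Y_{t_i}$ decomposes into finitely many irreducible closed subsets of $X$, and every such irreducible closed set has the form $V(\q) \cap X$ for some $\q \in X$. Let $\Lambda \subseteq X$ be the collection of all generic points of all the irreducible components of $Y_{t_1}, \ldots, Y_{t_k}$; this is a finite subset of $X$.

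To verify the conclusion, let $\p \in X \smallsetminus \Lambda$ and set $t = \delta_\p(S)$. Since $\p \in Y_t$, the prime $\p$ belongs to some irreducible component $V(\q) \cap X$ of $Y_t$ whose generic point $\q$ lies in $\Lambda$, which forces $\q \subseteq \p$. The inclusion must be strict, for otherwise $\p = \q \in \Lambda$, contradicting our choice. Moreover, $\q \in Y_t$ gives $\delta_\q(S) \ls t = \delta_\p(S)$, while $\q \subseteq \p$ together with the monotonicity observed just after Definition~\ref{Definition delta} yields $\delta_\q(S) \gs \delta_\p(S)$. Hence $\delta_\p(S) = \delta_\q(S)$, as required.

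I do not expect a substantive obstacle: the argument is a largely formal consequence of Lemma~\ref{Closed Set} together with the existence of generic points for irreducible closed subsets of a basic set. The one detail requiring a moment's care is verifying the uniform bound $\delta_\p(S) \ls \mu(M)$, which is precisely what guarantees that the set $\Lambda$ of generic points is finite rather than merely locally finite.
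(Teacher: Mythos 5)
Your proof is correct and follows essentially the same route as the paper: form the closed sets $Y_t$, take $\Lambda$ to be the generic points of their irreducible components, and combine $\p \in Y_t$ with the monotonicity $\delta_\q(S) \gs \delta_\p(S)$ for $\q \subseteq \p$. The only cosmetic difference is that you justify finiteness of $\Lambda$ via the explicit bound $\delta_\p(S) \ls \mu(M)$, whereas the paper notes equivalently that $Y_t = X$ for $t \gg 0$, so there are only finitely many distinct $Y_t$.
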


\begin{proof} For each $t\in\N$ the sets $Y_t:=\{\p\in X\mid \delta_\p(S)\ls t\}$ are closed by Lemma~\ref{Closed Set}. Also, observe that $Y_t =X$ for all $t\gg 0$. The sets $Y_t$ are finite unions of irreducible closed subsets by Proposition~\ref{Properties about Basic Sets}, and irreducible closed sets of $X$ are of the form $V(\p)\cap X$ for some $\p\in X$. Let $\Lambda$ be the finite collection of generic points of the finitely many irreducible components of the finite collection of closed sets $Y_t$, as $t$ varies through $\N$. Let $\p\in X \smallsetminus \Lambda$ and let $t=\delta_\p(S)$, so that $\p\in Y_t$ by definition. Let $\q$ be the generic point of an irreducible component of $Y_t$ which contains $\p$. Then $\q\subsetneq \p$ because $\q \in \Lambda$. Also, $t\gs \delta_\q(S)\gs \delta_\p(S)=t$, which implies the equality  $\delta_\q(S)= \delta_\p(S)$.  
\end{proof}

We now make the key definition of free-basic element.

\begin{Definition} Let $R$ be a commutative Noetherian ring, $M$ a finitely generated $R$-module, and $X$ a basic set. For a prime $\p \in X$, we say that $x\in M$ is a $\p$-free-basic element for $M$ if $\lr{x}_\p \cong R_\p$ is free, and the inclusion $\lr{x}_\p \subseteq M_\p$ splits. We say that $x$ is an $X$-free-basic element for $M$ if $x$ is a $\p$-free-basic element for all $\p\in X$. If the module $M$ is clear from the context, we will just say that $x$ is an $X$-free basic element.
\end{Definition}
Let $X$ be a basic set, and $\p \in X$ be a prime. Let $M$ be a finitely generated $R$-module, and $x \in M$. Observe that the element $x$ is $\p$-free-basic if and only if $\delta_\p(\{x\}) = 1$. More generally, for a prime $\p \in \Spec(R)$, we will say that a finite subset $S=\{x_1,\ldots,x_n\}$ of $M$ is a $\p$-free-basic set (for $M$) if $\delta_\p(S)\gs \min\{n, 1+\dim_X (\p)\}$. Here, $\dim_X (\p)$ denotes the supremum of the length of a chain $\p=\p_0\subsetneq \p_1\subsetneq\cdots \subsetneq \p_n$ with $\p_i\in X$ for each $i$. Note that, when $X=\Spec(R)$, we simply have $\dim_X(\p) = \dim(R/\p)$. We will say that $S$ is an $X$-free-basic set (for $M$) if $S$ is a $\p$-free-basic set (for $M$), for all $\p\in X$.

\begin{Lemma}\label{Main Lemma} Let $R$ be a commutative Noetherian ring, $M$ a finitely generated $R$-module, and $X$ a basic set. Let $S=\{x_1,\ldots,x_n\}\subseteq M$ be an $X$-free-basic set and $(a,x_1) \in R \oplus M$ be an $X$-free-basic element. Then there exist $a_1,\ldots,a_{n-1} \in R$ such that 
\[
\ds S'=\{x_1',x_2',\ldots,x_{n-1}'\}=\{x_1+aa_1x_n, x_2+a_2x_n,\ldots,x_{n-1}+a_{n-1}x_n\}
\] 
is $X$-free-basic, and $(a,x_1') \in R \oplus M$ is still an $X$-free-basic element.
\end{Lemma}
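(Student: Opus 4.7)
My plan is to reduce to finitely many primes via Lemma~\ref{Crucial Lemma}, analyze the local conditions, and globalize by prime avoidance. First apply Lemma~\ref{Crucial Lemma} to $S \subseteq M$ to obtain a finite set $\Lambda \subseteq X$. Of the two conclusions, the $X$-free-basicness of $(a,x_1')$ will turn out to be automatic regardless of the choice of $(a_1,\ldots,a_{n-1})$, while $S'$ being $X$-free-basic is where the work lies.

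For $(a,x_1')$, the coefficient $aa_1$ in $x_1'$ (rather than a bare $a_1$) is designed exactly to make this part free. Fix $\p \in X$. If $a \in \p$, then for any $\phi_\p$ with $\phi_\p(a,x_1) \in R_\p^\times$ given by the hypothesis, $\phi_\p(a,x_1') = \phi_\p(a,x_1) + aa_1\phi_\p(0,x_n) \equiv \phi_\p(a,x_1) \pmod{\p R_\p}$ is still a unit. If $a \notin \p$, take the projection $\phi_\p(r,m) = r/a$; then $\phi_\p(a,x_1') = 1$, again a unit. Hence $(a,x_1')$ is $\p$-free-basic for every $\p \in X$, without any constraint on the $a_i$'s.

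For $S'$, I would first record the automatic inequality $\delta_\p(S') \geq \delta_\p(S) - 1$ valid for any $(a_1,\ldots,a_{n-1})$: if $F \subseteq M_\p$ is a free summand of rank $\delta_\p(S)$ along which $S$ spans $F/\p F$, then the images of $S' \cup \{x_n\}$ still span $F/\p F$ (as each $\pi(x_i)$ is recoverable from $\pi(x_i) + c_i\pi(x_n)$ and $\pi(x_n)$), so $\pi(S')$ has codimension at most one in $F/\p F$. Combined with Lemma~\ref{Crucial Lemma}, for $\p \in X \setminus \Lambda$ picking $\q \in \Lambda$ with $\q \subsetneq \p$ and $\delta_\p(S) = \delta_\q(S)$ forces $\delta_\p(S) \geq \min\{n, 2+\dim_X(\p)\}$, so $\delta_\p(S') \geq \min\{n-1, 1+\dim_X(\p)\}$ is automatic. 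Real work is required only at those finitely many primes $\p \in \Lambda$ with $\dim_X(\p) \leq n-2$ and $\delta_\p(S) = 1+\dim_X(\p)$.

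At each such critical $\p$, set $d := \delta_\p(S)$, $v_i := \pi(x_i)$, $c_1 := aa_1$, and $c_i := a_i$ for $i \geq 2$; the condition $\delta_\p(S') \geq d$ unwinds to $v_n \in \operatorname{span}_{\kappa(\p)}(v_1+c_1v_n, \ldots, v_{n-1}+c_{n-1}v_n)$ inside $F/\p F \cong \kappa(\p)^d$. A short linear-algebra calculation, splitting on whether $v_n$ lies in $\operatorname{span}(v_1,\ldots,v_{n-1})$ (a single affine-linear condition on the $c_i$'s) or not (some $c_i \neq 0$), shows that the good locus for $(a_1 \bmod \p,\ldots,a_{n-1} \bmod \p)$ is a non-empty Zariski-open subset of $\kappa(\p)^{n-1}$. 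A standard prime-avoidance argument across the finite set $\Lambda$ then yields a global $(a_1,\ldots,a_{n-1}) \in R^{n-1}$ landing in all these good loci simultaneously. The main obstacle is verifying non-emptiness of the good locus when $a \in \p$: then $c_1 \equiv 0$ and one must rely on $a_2,\ldots,a_{n-1}$ alone, but the $\p$-free-basicness hypothesis on $S$ provides precisely the flexibility needed.
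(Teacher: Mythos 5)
Your proposal correctly handles the first two stages: the observation that $(a,x_1')$ is automatically $X$-free-basic for any $z = a_1 x_n$ (the paper makes the identical argument), the inequality $\delta_\p(S') \geq \delta_\p(S) - 1$, and the reduction via Lemma~\ref{Crucial Lemma} to the finitely many ``critical'' primes in $\Lambda$ with $\delta_\p(S) = 1 + \dim_X(\p) \leq n-1$. Up to this point you match the paper. The gap is in the globalization.

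Your ``standard prime-avoidance argument across the finite set $\Lambda$'' is not standard and does not work as stated. The conditions you want to satisfy are ``$(a_1 \bmod \p, \ldots, a_{n-1} \bmod \p)$ lies outside a proper affine subvariety of $\kappa(\p)^{n-1}$'' for each critical $\p$; these are conditions over different quotient rings $R/\p$ that are typically neither coprime nor incomparable, so one cannot invoke CRT or choose coordinates independently. In particular, if $\q \subsetneq \p$ are both critical, fixing $(a_i \bmod \q)$ determines $(a_i \bmod \p)$, and nothing in your argument prevents the two requirements from conflicting. The paper resolves this by a genuinely iterative construction: it orders $\Lambda = \{\q_1, \ldots, \q_m\}$ so that each $\q_\ell$ is \emph{minimal} in $\{\q_1, \ldots, \q_\ell\}$, then at stage $\ell$ modifies a single $a_i$ by adding $r \in (\q_1 \cap \cdots \cap \q_{\ell-1}) \smallsetminus \q_\ell$ (possible precisely because of the ordering), which repairs $\q_\ell$ without disturbing $\q_1, \ldots, \q_{\ell-1}$. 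Your sketch contains no analogue of this mechanism, and I see no way to replace it by a one-shot choice without additional hypotheses (e.g., infinite residue fields).

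A second, smaller gap: you attribute the ``flexibility'' in the case $a \in \p$, $c_1 \equiv 0$ to the free-basicness of $S$, but that is not what saves the argument. The paper shows that when the first element is the offending one ($\overline{x_1'} = 0$ in $F/\q_\ell F$), the hypothesis that $(a,x_1)$ -- hence $(a,x_1')$ -- is $\q_\ell$-free-basic forces $a \notin \q_\ell$, via a nontrivial contradiction: assuming $a \in \q_\ell$, one decomposes $x_1'$ along $M_{\q_\ell} \cong F \oplus \ker(\pi)$ and exhibits a free summand $F' = \lr{y}_{\q_\ell} \oplus F \subseteq \lr{T}_{\q_\ell}$ of rank $\delta_{\q_\ell}(T)+1$, contradicting maximality. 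For $n=2$ this is the whole game (there are no $a_2, \ldots, a_{n-1}$ to fall back on), so the input you cite cannot be the right one. Your proposal needs this argument spelled out, and it specifically needs the free-basicness of $(a,x_1)$, not of $S$.
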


\begin{proof} Let $\Lambda$ be as in Lemma~\ref{Crucial Lemma}, we claim that for any choice of $a_1,a_2\ldots,a_{n-1}\in R$ and $\p\in X\smallsetminus \Lambda$ the set $\{x_1+aa_1x_n,x_2+a_2x_n,\ldots,x_{n-1}+a_{n-1}x_n\}$ is $\p$-free-basic. Indeed, let $a_1,\ldots,a_{n-1}\in R$ and $S'=\{x_1+aa_1x_n,\ldots,x_{n-1}+a_{n-1}x_n\}$. Let $F$ be a free $R_\p$-module such that the dimension of the subspace spanned by the image of the composition $\lr{S'\cup\{x_n\}}_\p=\lr{S}_\p\subseteq M_\p\rightarrow F\rightarrow F/\p F$ generates a $R_\p/\p R_\p$-vector subspace of rank $\delta_\p(S)$. Then the image of $\lr{S'}_\p$ under this composition is an $R_\p/\p R_\p$-vector subspace of dimension at least $\delta_\p(S)-1$. Therefore $\delta_\p(S')\gs \delta_\p(S)-1$. If $\p\in X \smallsetminus \Lambda$ then by Lemma~\ref{Crucial Lemma} there is a $\q\in\Lambda$ such that $\q\subsetneq \p$ and $\delta_\q(S)=\delta_\p(S)$. Hence $$\delta_\p(S')\gs \delta_\p(S)-1=\delta_\q(S)-1\gs \min\{n, 1+\dim_X(\q)\}-1\gs\min\{n-1, 1+\dim_X(\p)\}.$$

Suppose that $\Lambda = \{\q_1, \dotsc, \q_m\}$, and arrange the primes $\q_i$ in such a way that, for each $1 \ls \ell \ls m$, $\q_\ell$ is a minimal element of $\{\q_1, \dotsc, \q_l\}$. We prove, by induction on $\ell \gs 0$, that there exist $a_1,a_2,\ldots,a_{n-1}$ such that $\{x_1+aa_1x_n,x_2+a_2x_n,\ldots,x_{n-1}+a_{n-1}x_n\}$ is $\q_i$-free basic for all $1 \ls i \ls \ell$. Note that, for $\ell = m$, we get the desired claim for the lemma. When $\ell = 0$ we are done.

Note that, since $(a,x_1)$ is $X$-free-basic by assumption, any element of the form $(a,x_1+az)$, with $z \in M$, is $X$-free-basic as well. In fact, let $\p \in X$. If $a \notin \p$, then $(a,x_1+az) \cdot R_\p \subseteq (R \oplus M)_\p$ splits, since $a$ generates $R_\p$. If $a \in \p$, then $(a,x_1+az) \in (R \oplus M)_\p$ generates a free summand if and only if $(a,x_1)$ does, since $(a,x_1+az) \equiv (a,x_1)$ modulo $\p(R \oplus M)_\p$. But $(a,x_1)$ is $\p$-free basic by assumption. Either way, $(a,x_1+az)$ is $\p$-free-basic for all $\p \in X$. 

By induction on $\ell$, there exist $a_1,\ldots,a_{n-1}\in R$ such that the set $T:=\{x_1+aa_1x_n,x_2+a_2x_n,\ldots,x_{n-1}+a_{n-1}x_n\}$ is $\q$-free-basic for each $\q \in \{\q_1,\ldots,\q_{\ell-1}\}$. If $T$ happens to be $\q_\ell$-free-basic as well, we set $S'=T$, and we are done. Else, $\delta_{\q_\ell}(T)<\min\{n-1, 1+\dim_X (\q_\ell)\}$. Let $F$ be a free direct summand of $M_{\q_\ell}$ such that the $R_{\q_\ell}/{\q_\ell}R_{\q_\ell}$-vector space spanned by the image of $T$ under the natural map $M_{\q_\ell}\rightarrow F\rightarrow F/{\q_\ell}F$ has dimension $\delta_{\q_\ell}(T)$. We may assume, without loss of generality, that $F \subseteq \lr{T}_{\q_\ell}$. For each $x\in M$ let $\overline{x}$ denote the image of $x$ in $F/{\q_\ell}F$. The condition that $\delta_{\q_\ell}(T)< \min\{n-1,1+\dim_X (\q_{\ell})\}$ implies that the $R_{\q_\ell}/{\q_\ell}R_{\q_\ell}$-vector space spanned by $\{\overline{x_1+aa_1x_n},\ldots,\overline{x_{n-1}+a_{n-1}x_n}\}$ has dimension strictly smaller than $n-1$. Thus for some $1\ls i\ls n-1$, the $i$-th element in the above set is in the span of the the previous $i-1$ elements. We distinguish two cases.
\begin{itemize}
\item Assume $i\ne 1$. Let $r\in \left(\q_1\cap \cdots \cap \q_{\ell-1}\right) \smallsetminus \q_\ell$ and define
\[
\ds S':=\{x_1+aa_1x_n,\ldots, x_{i}+(a_i +r)x_n,\ldots,x_{n-1}+a_{n-1}x_n\}.
\] Since $r\in R \smallsetminus \q_\ell$, and because $\overline{x_i + a_ix_n}$ is in the span of the previous $i-1$ elements, the set $\{\overline{x_1+aa_1x_n},\ldots, \overline{x_{i}+(a_i +r)x_n},\ldots,\overline{x_{n-1}+a_{n-1}x_n}\}$ spans the same $R_{\q_\ell}/\q_\ell R_{\q_\ell}$-vector space as $\{\overline{x_1+aa_1x_n},\ldots, \overline{r x_n},\ldots,\overline{x_{n-1}+a_{n-1}x_n}\}$ which, in turn, spans the same  $R_{\q_\ell}/\q_\ell R_{\q_\ell}$-vector space as $\{\overline{x_1},\ldots,\overline{x_n}\}$. 

\item Now assume $i=1$, which means that $\overline{x_1+aa_1x_n} = 0$ in $F/\q_\ell F$. We claim that the element $a$ is not inside $\q_\ell$. In fact assume, by way of contradiction, that $a \in \q_\ell$. Then $(0,x_1+aa_1x_1)$ is $\q_\ell$-free-basic, because $(a,x_1)$ is $\q_\ell$-free-basic and $(a, x_1) \equiv (0,x_1+aa_1x_n)$ modulo $\q_\ell(R \oplus M)_{\q_\ell}$. Recall that $F \subseteq \lr{T}_{\q_\ell}$ is a free summand of $M_{\q_\ell}$ of rank $\delta_{\q_\ell}(T)$, and that the image of $T$ under $M_{\q_\ell} \to F \to F/\q_\ell F$ has $R_{\q_\ell}/\q_\ell R_{\q_\ell}$-vector space dimension equal to $\delta_{\q_\ell}(T)$. Set $x_1':=x_1+aa_1x_n \in T$, and let $\iota:F \to M_{\q_\ell}$ and $\pi:M_{\q_\ell} \to F$ denote the natural inclusion and projection, respectively. The fact that $\overline{x_1'} = 0$ simply means that $\pi(x_1') \in \q_\ell F$. If $x_1' \in F$, then $x_1' = \pi\iota(x_1') \in \q_{\ell}F$, so that $(0,x_1') \in \q_\ell(R \oplus M)_{\q_\ell}$, contradicting the fact that $(0,x_1')$ is $\q_\ell$-free-basic. Thus, we necessarily have $x_1' \notin F$. Let $y:=x_1'-\iota \pi(x_1')$, and note that $y \in \ker(\pi)$. Since $(0,x_1') - (0,y) = (0,\iota \pi(x_1')) \in \q_\ell(R \oplus M)_{\q_\ell}$, we have that $(0,y)$ is $\q_\ell$-free-basic as well. This means that $\lr{y}_{\q_\ell} \subseteq M_{\q_\ell}$ splits, and that $\lr{y}_{\q_\ell} \cong R_{\q_\ell}$. Because $M_{\q_\ell} \cong \ker(\pi) \oplus F$, and because $y \in \ker(\pi)$ splits out of $M_{\q_\ell}$, we have that $F' = \lr{y}_{\q_\ell} \oplus F$ is a free direct summand of $M_{\q_\ell}$. In addition, because $x_1' \in T$ and $\iota \pi(x_1') \in F \subseteq \lr{T}_{\q_\ell}$, we conclude that $y \in \lr{T}_{\q_\ell}$. Therefore $F' \subseteq \lr{T}_{\q_\ell}$ is a free direct summand of $M_{\q_\ell}$, and $F'$ has rank $\delta_{\q_\ell}(T)+1$, a contradiction. This shows that $a \notin \q_\ell$. Let $r \in \left(\q_1 \cap \ldots \cap \q_{\ell-1}\right) \smallsetminus \q_\ell$, and define
\[
\ds S':=\{x_1+a(a_1+r)x_n,x_2+a_2x_n,\ldots,x_{n-1}+a_{n-1}x_n\}.
\]
Since $ar \notin \q_\ell$ and $\overline{x_1+aa_1x_n} = 0$, the set 
\[
\ds \{\overline{x_1+a(a_1+r)x_n},\ldots, \overline{x_{i}+a_ix_n},\ldots,\overline{x_{n-1}+a_{n-1}x_n}\}
\]
spans the same $R_{\q_\ell}/\q_\ell R_{\q_\ell}$-vector space as $\{\overline{x_1},\ldots,\overline{x_n}\}$ inside $F/\q_\ell F$. 
\end{itemize}
Either way, we obtain that $\delta_{\q_\ell}(S')\gs \delta_{\q_\ell}(S)$. Since $S$ is $\q_\ell$-free-basic, we have that $\delta_{\q_\ell}(S) \gs \min\{n,1+\dim_X(\q_\ell)\}\gs \min\{n-1,1+\dim_X(\q_\ell)\}$. Moreover, $r\in \q_i$ for each $1\ls i\ls \ell-1$, therefore we also have that $\delta_{\q_i}(S')=\delta_{\q_i}(T) \gs \min\{n-1,1+\dim_X(\q_i)\}$ for each $1\ls i\ls \ell-1$. This completes the proof of the inductive step. As previously mentioned, the lemma now follows from choosing $\ell = m$.
\end{proof}

\begin{Theorem}\label{Free-basic elements exist} Let $R$ be a commutative Noetherian ring, $N \subseteq M$ finitely generated $R$-modules, and $X$ a basic set. Assume that, for each $\p\in X$, $N_\p$ contains a free $R_\p$-module of rank at least $1+\dim_X (\p)$ that splits off of $M_\p$. Then there exists $x\in N$ that is $X$-free-basic for $M$. Moreover, if an element $(a,y) \in R \oplus N$ is $X$-free-basic for $R\oplus M$, then $x$ can be chosen to be of the form $x=y+az$ for some $z \in N$. 

\end{Theorem}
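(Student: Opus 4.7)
The plan is to deduce the theorem from repeated applications of Lemma~\ref{Main Lemma}, shrinking a generating set of $N$ by one element at each step while maintaining a distinguished first entry. The unqualified existence statement reduces to the moreover part: taking $a=1$ and $y$ to be any element of $N$, the pair $(1,y)$ is automatically $X$-free-basic for $R \oplus M$ (since $1 \in R$ generates $R_\p$ as a free summand of $R_\p$ at every $\p$), and the moreover conclusion will produce an $X$-free-basic element of the form $y + 1 \cdot z = y + z \in N$.

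For the moreover statement, fix generators $y_1, \ldots, y_m$ of $N$ and form the ordered set $S_0 := \{y, y_1, \ldots, y_m\}$. Since $\lr{S_0}_\p = N_\p$ contains, by hypothesis, a free $R_\p$-summand of $M_\p$ of rank at least $1+\dim_X(\p)$, we get $\delta_\p(S_0) \gs 1+\dim_X(\p) \gs \min\{m+1,\ 1+\dim_X(\p)\}$ for every $\p \in X$, so $S_0$ is an $X$-free-basic set for $M$. Together with the hypothesis that $(a,y) = (a, x_1)$ is $X$-free-basic for $R\oplus M$, this meets all hypotheses of Lemma~\ref{Main Lemma}, which produces an $X$-free-basic set $S_1 \subseteq N$ of size $m$ whose first entry has the form $y + a\cdot a_1 y_m = y + a z_1$ for some $z_1 \in N$, and such that $(a, y+az_1)$ remains $X$-free-basic for $R \oplus M$. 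Iterating: after $k$ applications we have an $X$-free-basic set $S_k \subseteq N$ of size $m+1-k$ whose first entry is $y + az_k$ with $z_k \in N$, and the pair $(a, y+az_k)$ is $X$-free-basic. After $m$ iterations we arrive at a singleton $S_m = \{y+az\}$ with $z \in N$. Being $X$-free-basic of size one, this singleton satisfies $\delta_\p(\{y+az\}) \gs \min\{1, 1+\dim_X(\p)\} = 1$ for every $\p \in X$, which by definition means $y+az$ is an $X$-free-basic element of $M$, as required.

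The main obstacle, modest as it is, lies in the bookkeeping across iterations: one must check that after each application of Lemma~\ref{Main Lemma} the new set still sits inside $N$, that its first entry remains in the coset $y + aN$, and that this first entry still pairs with $a$ to form an $X$-free-basic element of $R \oplus M$ so that the next iteration is legal. All three are forced by the explicit form of the output in Lemma~\ref{Main Lemma}, namely $S' = \{x_1 + a a_1 x_n, x_2 + a_2 x_n, \ldots, x_{n-1} + a_{n-1} x_n\}$: every new entry is an $R$-linear combination of elements of $N$, the first entry is modified only by an element of $aN$, and the lemma itself asserts that $(a, x_1')$ is again $X$-free-basic. Since Lemma~\ref{Main Lemma} carries the substantive content, no further obstruction arises.
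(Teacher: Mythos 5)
Your proof is correct and follows essentially the same route as the paper's: complete $y$ to a generating set of $N$, verify it is an $X$-free-basic set for $M$ using the hypothesis on local free summands, and iterate Lemma~\ref{Main Lemma} down to a singleton while tracking that the first entry stays in the coset $y + aN$ and remains paired with $a$ as an $X$-free-basic element of $R \oplus M$. The only difference is that you spell out the iteration and the bookkeeping more explicitly than the paper does, which is a fair and useful elaboration rather than a different approach.
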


\begin{proof} Assume that $(a,y) \in R \oplus N$ is $X$-free-basic for $R \oplus M$. Complete $y$ to a generating set $S=\{y,y_1,\ldots,y_n\}$ for $N$, which is easily verified to be $X$-free-basic for $M$, given our assumptions. Continued use of Lemma~\ref{Main Lemma} implies that there exist $a_1,\ldots,a_n \in R$ such that $x=y+a(a_1y_1+\ldots+a_ny_n)$ is $X$-free-basic for $M$, proving the last claim. As for the first claim, notice that $(1, y) \in R\oplus N$ is always $X$-free-basic for $R\oplus M$, for any $y \in N$. Thus existence follows from the last claim.
\end{proof}

\begin{Lemma} \label{delta=length} Let $(R,\m)$ be a local ring, $M$ a finitely generated $R$-module, and $S \subseteq M$ a subset. Consider $I(S,M):=\{y \in \lr{S} \mid f(y) \in \m$ for all $f \in \Hom_R(M,R)\}$ which is an $R$-submodule of $\lr{S}$. If $\lambda_R(-)$ denotes the length as an $R$-module, then
\[
\ds \delta_\m(S,M) = \lambda_R\left(\frac{\lr{S}}{I(S,M)}\right).
\]
\end{Lemma}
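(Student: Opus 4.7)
The plan is to establish both inequalities in turn, after first recording that $\m\lr{S}\subseteq I(S,M)$: if $y=\sum r_j s_j$ with $r_j\in\m$ and $s_j\in\lr{S}$, then $f(y)=\sum r_j f(s_j)\in\m$ for every $f\in\Hom_R(M,R)$. Hence $\lr{S}/I(S,M)$ is naturally a module over the field $R/\m$, so its length over $R$ agrees with its dimension as an $R/\m$-vector space.

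For the bound $\lambda_R(\lr{S}/I(S,M))\gs \delta_\m(S,M)$, I would set $n=\delta_\m(S,M)$ and, using condition (2) of Definition~\ref{Definition delta}, pick a free $R$-summand $G\subseteq \lr{S}$ of $M$ of rank $n$. The splitting $M\twoheadrightarrow G\cong R^n$ provides homomorphisms $\pi_1,\ldots,\pi_n\in\Hom_R(M,R)$ recovering the coordinates on a basis $g_1,\ldots,g_n$ of $G$. A direct check then gives $G\cap I(S,M)=\m G$: the containment $\supseteq$ follows from the opening observation together with $G\subseteq\lr{S}$; conversely, if $y=\sum c_i g_i\in G\cap I(S,M)$, then each $c_i=\pi_i(y)\in\m$. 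Consequently $G/\m G$ embeds as an $n$-dimensional subspace of $\lr{S}/I(S,M)$.

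For the reverse inequality, let $V:=\lr{S}/I(S,M)$ and $k:=\dim_{R/\m}V$. Composing restriction to $\lr{S}$ with the reduction $R\twoheadrightarrow R/\m$ yields a linear map $\Phi\colon\Hom_R(M,R)\to V^{*}$. The crucial step is the surjectivity of $\Phi$: if its image were a proper subspace of $V^{*}$, then some nonzero $\bar y\in V$ would satisfy $f(y)\in\m$ for every $f\in\Hom_R(M,R)$, placing $y$ in $I(S,M)$, a contradiction. Choosing a basis $\bar y_1,\ldots,\bar y_k$ of $V$ and lifting a dual basis yields $f_1,\ldots,f_k\in\Hom_R(M,R)$ with $f_i(y_j)\equiv\delta_{ij}\pmod{\m}$. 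The matrix $\bigl(f_i(y_j)\bigr)$ is then the identity modulo $\m$, hence invertible over the local ring $R$; this simultaneously shows that $\lr{y_1,\ldots,y_k}$ is free of rank $k$ and, via the map $(f_1,\ldots,f_k)\colon M\to R^k$ composed with the inverse of this matrix, splits off $M$, yielding $\delta_\m(S,M)\gs k$.

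The only step that is more than formal is the surjectivity of $\Phi$, and this is really the content of the lemma: it says that the definition of $I(S,M)$ is tight, in the sense that every $R/\m$-linear functional on $\lr{S}/I(S,M)$ is induced by an honest $R$-linear map $M\to R$. Once this is in hand, both inequalities reduce to elementary linear algebra plus a Nakayama-style invertibility argument for the dual-basis matrix.
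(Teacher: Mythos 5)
Your proof is correct, and it takes a somewhat different route than the paper's. The paper's argument is a one-shot structural computation: fixing a free summand $F\subseteq\lr{S}$ of rank $\delta:=\delta_\m(S,M)$ with $M=F\oplus T$, it asserts the decomposition $I(S,M)=\m F\oplus(T\cap\lr{S})$ (delegating the nontrivial containment $\supseteq$, namely that every $f\in\Hom_R(M,R)$ sends $T\cap\lr{S}$ into $\m$, to a cited external argument), together with $\lr{S}=F\oplus(T\cap\lr{S})$, and reads off the length. You instead split the statement into two inequalities. Your lower bound $\lambda_R(\lr{S}/I(S,M))\gs\delta$ parallels the easy half of the paper's decomposition (showing $G\cap I(S,M)=\m G$ via the coordinate projections). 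Your upper bound is where you diverge: rather than invoking a maximality-of-$\delta$ argument to pin down $I(S,M)$, you prove that the evaluation map $\Phi\colon\Hom_R(M,R)\to(\lr{S}/I(S,M))^{*}$ is surjective directly from the definition of $I(S,M)$, then lift a dual basis and use Nakayama-type invertibility of the matrix $(f_i(y_j))$ to explicitly build a free summand of rank $\dim_{R/\m}(\lr{S}/I(S,M))$ inside $\lr{S}$. This makes the argument fully self-contained and also makes transparent the mechanism by which a large length forces a large free summand, whereas the paper's version is shorter on the page but leans on the reference for the key containment.
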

\begin{proof}
Let $\delta:=\delta_\m(S,M)$, and $F \subseteq \lr{S}$ be such that $F \cong R^\delta$ and $M = F \oplus T$ for some $R$-module $T \subseteq M$. By a slight modification of the argument in \cite[Discussion 6.7]{Huneke_HK_F-sig}, we see that $I(S,M) = \m F \oplus (T \cap \lr{S}) \cong \m^\delta \oplus (T \cap \lr{S})$. Since $\lr{S} = (F \oplus T) \cap \lr{S} = F \oplus (T \cap \lr{S}) \cong R^\delta \oplus (T \cap \lr{S})$, we finally have 
\[
\ds \delta = \lambda_R\left(\frac{R^\delta \oplus (T \cap \lr{S})}{\m^\delta \oplus (T \cap \lr{S})}\right) = \lambda_R\left(\frac{F \oplus (T \cap \lr{S})}{\m F \oplus (T \cap \lr{S})}\right) = \lambda_R\left(\frac{\lr{S}}{I(S,M)}\right). \qedhere
\]
\end{proof}
\begin{Lemma} \label{Induction delta} Let $R$ be a commutative Noetherian ring, $M$ a finitely generated $R$-module, $S$ a subset of $M$, and $F \subseteq \lr{S}$ a free $R$-module of rank $i$ such that $M = F \oplus M'$ for some $M' \subseteq M$. Let $S'$ be the projection of $S$ to $M'$ along the internal direct sum. For all $\p \in \Spec(R)$, we have $\delta_\p(S',M') = \delta_\p(S,M)-i$.
\end{Lemma}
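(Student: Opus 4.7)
The plan is to reduce to the local case by localizing at $\p$, and then to use Lemma~\ref{delta=length} to translate both $\delta_\p(S,M)$ and $\delta_\p(S',M')$ into lengths of certain quotients. After localization we may assume $(R,\m)$ is local with $\m = \p$, that $M = F \oplus M'$ with $F$ free of rank $i$ and $F \subseteq \lr{S}$, and that $S' = \{s - \pi_F(s) \mid s \in S\}$, where $\pi_F : M \to F$ denotes the projection.

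The first step I would carry out is to show that $\lr{S} = F \oplus \lr{S'}$ as submodules of $M = F \oplus M'$. The inclusion $F + \lr{S'} \subseteq \lr{S}$ holds because $F \subseteq \lr{S}$ by hypothesis and because each $s' = s - \pi_F(s) \in \lr{S}$; the reverse inclusion follows by writing every generator $s \in S$ as $\pi_F(s) + s'$ with $\pi_F(s)\in F$ and $s' \in \lr{S'}$.

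The second step is to establish the companion decomposition
\[
I(S,M) = \m F \oplus I(S',M').
\]
Here one uses the identification $\Hom_R(M,R) = \Hom_R(F,R) \oplus \Hom_R(M',R)$. For a given $y = y_F + y' \in \lr{S}$ with $y_F \in F$ and $y' \in \lr{S'}$, testing against functionals supported on $F$ (extended by $0$ on $M'$) forces $y_F \in \m F$ whenever $y \in I(S,M)$, while testing against functionals supported on $M'$ (extended by $0$ on $F$) forces $y' \in I(S',M')$; the converse inclusion is a direct check using that every map $M\to R$ splits as a sum of such components.

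Taking lengths and combining the two decompositions gives
\[
\lambda_R\!\left(\frac{\lr{S}}{I(S,M)}\right) = \lambda_R\!\left(\frac{F}{\m F}\right) + \lambda_R\!\left(\frac{\lr{S'}}{I(S',M')}\right) = i + \lambda_R\!\left(\frac{\lr{S'}}{I(S',M')}\right),
\]
and Lemma~\ref{delta=length} then yields $\delta_\p(S,M) = i + \delta_\p(S',M')$. I do not anticipate a serious obstacle; the step that requires the most care is the identity $I(S,M) = \m F \oplus I(S',M')$, which hinges on being able to split and extend $R$-linear functionals freely along the direct sum $M = F \oplus M'$.
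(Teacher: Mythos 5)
Your argument is correct and is essentially identical to the paper's proof: localize at $\p$, decompose $\lr{S}=F\oplus\lr{S'}$ and correspondingly $I(S,M)=\m F\oplus I(S',M')$ (the paper writes $I(F,F)\oplus I(S',M')$ and then notes $I(F,F)=\m F$, a cosmetic difference), and finish by taking lengths and invoking Lemma~\ref{delta=length}. No gaps; the splitting of functionals along $M=F\oplus M'$ that you flag as the key step is exactly what the paper relies on as well.
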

\begin{proof}
After localizing at $\p$, we can assume that $(R,\m)$ is a local ring, with $\p=\m$.
Let $I(S,M)$ be as in Lemma~\ref{delta=length}, and similarly define $I(F,F)$ and $I(S',M')$. Given that $\lr{S} = F \oplus \lr{S'} \subseteq F \oplus M' = M$, it is straightforward to see that $I(S, M) = I(F, F) \oplus I(S', M')$, which gives rise the following (actually splitting) short exact sequence
\[
\xymatrixcolsep{5mm}
\xymatrixrowsep{2mm}
\xymatrix{
0 \ar[rr] && \ds  \frac{F}{I(F,F)} \ar[rr] && \ds \frac{\lr{S}}{I(S,M)} \ar[rr] && \ds \frac{\lr{S'}}{I(S',M')} \ar[rr] && 0.
}
\]
Since $F$ is a free $R$-module we have $I(F,F) = \m F$, because every minimal generator of $F$ is sent to a unit by some element of $\Hom_R(F,R)$. Therefore, Lemma~\ref{delta=length} and the short exact sequence above give
\[
\ds \delta_\m(S',M') = \lambda_R\left(\frac{\lr{S'}}{I(S',M')}\right) = \lambda_R\left(\frac{\lr{S}}{I(S,M)}\right)-\lambda_R\left(\frac{F}{\m F}\right) = \delta_\m(S,M) - i. \qedhere
\]
\end{proof}

Theorem~\ref{Serre's Theorem Intro} is a first consequence of the existence of free-basic elements for modules that, locally at every prime, have enough free summands. 
\begin{Theorem}\label{Generalized Serre} Let $R$ be a commutative Noetherian ring, $N \subseteq M$ finitely generated $R$-modules, $X= j$-$\Spec(R)$, and let $i$ be a positive integer. Assume that, for each $\p \in X$, $M_\p$ contains a free $R_\p$-summand $F(\p)$ of rank at least $i+\dim_X (\p)$. Then $M$ contains a free $R$-summand $F$ of rank at least $i$. In addition, if $F(\p) \subseteq N_\p$ for all $\p \in X$, then $F$ can be realized inside $N$.
\end{Theorem}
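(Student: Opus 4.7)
The plan is to proceed by induction on $i \geq 1$, using Theorem~\ref{Free-basic elements exist} to produce a rank-$1$ free summand and then Lemma~\ref{Induction delta} to reduce to a smaller instance of the same theorem.

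For the base case $i=1$, I would directly invoke Theorem~\ref{Free-basic elements exist} with the basic set $X = j\text{-}\Spec(R)$. If we are in the weak setting (no $N$-containment hypothesis), apply the theorem with $N = M$; otherwise use $N$ as given. Either way, the hypothesis $\delta_\p(N,M) \geq 1 + \dim_X(\p)$ for all $\p \in X$ is satisfied, so we obtain an element $x \in N$ which is $X$-free-basic for $M$. In particular, for every $\m \in \Max(R) \subseteq j\text{-}\Spec(R) = X$, the cyclic submodule $\lr{x}_\m \subseteq M_\m$ is a rank-$1$ free $R_\m$-summand. Since splitting of a submodule is a local condition at maximal ideals, $\lr{x} \cong R$ is a free summand of $M$ sitting inside $N$.

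For the inductive step, assume the result for $i-1 \geq 1$ and suppose the rank assumption holds with $i$. Apply the base case (which only requires rank $\geq 1 + \dim_X(\p)$, a weaker condition) to produce $x \in N$ that is $X$-free-basic for $M$, yielding a splitting $M = Rx \oplus M'$. Let $\pi \colon M \to M'$ be the projection along $Rx$, and set $N' := \pi(N)$. Because $x \in N$, one checks easily that $N' = N \cap M'$, so $N' \subseteq N$. Now apply Lemma~\ref{Induction delta} to $S = N$ and the rank-$1$ summand $F = Rx \subseteq \lr{N}$: it gives
\[
\delta_\p(N', M') \;=\; \delta_\p(N, M) - 1 \;\geq\; (i-1) + \dim_X(\p)
\]
for every $\p \in X$. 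In other words, $N' \subseteq M'$ satisfies the hypotheses of the theorem with the integer $i-1$ in place of $i$. By induction, $M'$ contains a free $R$-summand $F'' \subseteq N'$ of rank $\geq i-1$. Since $Rx$ is a free summand of $M$ and $F''$ is a free summand of $M'$, the submodule $F := Rx \oplus F'' \subseteq M$ is a free summand of rank $\geq i$. Moreover $x \in N$ and $F'' \subseteq N' \subseteq N$ imply $F \subseteq N$, completing the induction. In the weak version (without an $N$-containment hypothesis), one runs the entire argument with $N$ replaced by $M$.

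The main conceptual obstacle is verifying that one can iterate the construction, which amounts to showing that peeling off a rank-$1$ free summand preserves the local free-rank hypothesis. This is precisely the content of Lemma~\ref{Induction delta}; without it, one would have no direct way of comparing $\delta_\p(N',M')$ to $\delta_\p(N,M)$ after passing to the complementary summand. A secondary, more mundane, point is the identification $\pi(N) = N \cap M'$, which is essential to keep the inductive summand inside $N$ and relies on the fact that the splitting element $x$ itself belongs to $N$, as guaranteed by the last clause of Theorem~\ref{Free-basic elements exist}.
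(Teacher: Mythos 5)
Your proof is correct and follows essentially the same path as the paper's: induction on $i$, with the base case produced by Theorem~\ref{Free-basic elements exist} and the inductive step reduced via Lemma~\ref{Induction delta}. Your explicit check that $\pi(N) = N \cap M'$ (using $x \in N$) is a slightly more careful rendering of the same bookkeeping the paper does when it passes to $S' = S \cap M'$ for a generating set $S$ of $N$.
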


\begin{proof} It suffices to prove the statement involving $N$, since the first part of the theorem is nothing but the case $N=M$. We proceed by induction on $i \gs 1$. Let $i=1$, and assume that $F(\p) \subseteq N_\p$ is a free summand of $M_\p$ of rank at least $1+\dim_X(\p)$ for all $\p \in X$. Let $S$ be a generating set for $N$. Our assumptions guarantee that $\delta_\p(S,M) \gs 1+\dim_X(\p)$ for all $\p \in X$. Thus, there exists an element $x \in N$ that is $X$-free-basic for $M$, by Theorem~\ref{Free-basic elements exist}. In particular, this means that $\lr{x}_\m \cong R_\m$ and the inclusion $\lr{x}_\m \subseteq M_\m$ splits for each maximal ideal $\m \in \Max(R)$. Hence $F:=\lr{x}$ is a free $R$-submodule of $N$, and the inclusion $F \subseteq M$ splits globally. Now let $i>1$. By the base case $i=1$, there exists $x \in N$ such that $\lr{x} \cong R$ and $M = \lr{x} \oplus M'$ for some $R$-module $M' \subseteq M$. Let $S' = S \cap M'$. By Lemma~\ref{Induction delta}, we have that $\delta_\p(S',M') = \delta_\p(S,M)-1 \gs i-1+\dim_X(\p)$ for all $\p \in X$, and by induction we conclude that $M'$ has a free summand $F' \subseteq \lr{S'} \subseteq N$ of rank at least $i-1$. Therefore $F:= \lr{x} \oplus F'$ is a free submodule of $N$ of rank at least $i$, and it splits off of $M$.
\end{proof}

Note that Theorem~\ref{Generalized Serre} recovers the classical version of Serre's Splitting Theorem~\ref{Serre's Theorem on Projective Modules} \cite[Theorem 1]{Serre}.
Another consequence of Lemma~\ref{Main Lemma} is Theorem~\ref{Bass's Theorem Intro}. That is, a cancellation result for modules that, locally at each prime, have enough free summands. 

\begin{Theorem} \label{Bass's Cancellation general} Let $R$ be a commutative Noetherian ring, $M$ a finitely generated $R$-module, and $X=j$-$\Spec(R)$. Assume that, for each $\p \in X$, the module $M_\p$ contains a free $R_\p$-summand of rank at least $1+\dim_X (\p)$. Let $Q$ be a finitely generated projective $R$-module, and $N$ a finitely generated $R$-module such that $Q \oplus M \cong Q \oplus N$. Then $M \cong N$.
\end{Theorem}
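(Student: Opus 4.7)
The plan is to reduce to the case $Q = R$ by writing $Q$ as a direct summand of some free module $R^k$, absorbing the complement into both sides of the hypothesis to obtain $R^k \oplus M \cong R^k \oplus N$, and inducting on $k$. We may therefore assume we are given an isomorphism $\phi \colon R \oplus M \to R \oplus N$, and we set $(a, n_0) := \phi(1, 0) \in R \oplus N$. Since $\phi$ maps the free summand $R \oplus 0$ of $R \oplus M$ isomorphically onto $\lr{(a, n_0)}$, this submodule is a free $R$-summand of $R \oplus N$, so $(a, n_0)$ is an $X$-free-basic element of $R \oplus N$.

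Next, I want to apply Theorem~\ref{Free-basic elements exist} to produce an $X$-free-basic element of $N$, but this requires first verifying that $N_\p$ contains a free summand of rank at least $1 + \dim_X(\p)$ for each $\p \in X$. Localizing $\phi$ gives $R_\p \oplus M_\p \cong R_\p \oplus N_\p$ over the Noetherian local ring $R_\p$, and a standard Nakayama-based local Bass cancellation argument shows that $\frk$ is additive under direct sum in the local setting, yielding $\frk(N_\p) = \frk(M_\p) \gs 1 + \dim_X(\p)$. Theorem~\ref{Free-basic elements exist}, applied with both modules taken to be $N$ and with the $X$-free-basic element $(a, n_0) \in R \oplus N$, then produces an element $x = n_0 + az \in N$ (for some $z \in N$) that is $X$-free-basic for $N$. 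Since $\Max(R) \subseteq X$, the submodule $\lr{x} \cong R$ splits off of $N$ globally: write $N = \lr{x} \oplus N''$ and let $\rho \colon N \to R$ be the associated projection, so $\rho(x) = 1$.

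The conclusion will follow by post-composing $\phi$ with three elementary automorphisms of $R \oplus N$ that progressively transport $(a, n_0)$ to $(1, 0)$: the map $(r, y) \mapsto (r, y + rz)$ sends $(a, n_0)$ to $(a, x)$; the map $(r, y) \mapsto (r + (1 - a)\rho(y), y)$ sends $(a, x)$ to $(1, x)$; and the map $(r, y) \mapsto (r, y - rx)$ sends $(1, x)$ to $(1, 0)$. Each is an invertible change of coordinates, so the composite $\Psi \colon R \oplus M \to R \oplus N$ is an isomorphism with $\Psi(1, 0) = (1, 0)$; passing to the quotients by the rank-one free summand generated by $(1, 0)$ on both sides yields $M \cong (R \oplus M)/\lr{(1, 0)} \cong (R \oplus N)/\lr{(1, 0)} \cong N$. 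The main obstacle is the second step---guaranteeing that $N_\p$ has enough local free summands, which is needed before the free-basic element machinery can be invoked on $N$; once this local cancellation input is established, the remainder is a mechanical application of Theorem~\ref{Free-basic elements exist} combined with the classical elementary-automorphism trick from the proof of Bass cancellation.
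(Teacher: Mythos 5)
Your proof is correct, and the overall architecture---reduce to $Q=R$, extract a free-basic element via Theorem~\ref{Free-basic elements exist}, transport the image of $(1,0)$ to $(1,0)$ by three elementary automorphisms, then quotient---is the same as the paper's. The one genuine structural difference is the choice of direction for the isomorphism. You set $\phi\colon R\oplus M \to R\oplus N$ and therefore land your $X$-free-basic element $(a,n_0)$ inside $R\oplus N$; this forces you to verify that $N_\p$ also has a free summand of rank at least $1+\dim_X(\p)$ before Theorem~\ref{Free-basic elements exist} can be invoked, which you do by a local free-rank cancellation. The paper instead takes $\alpha\colon R\oplus N \to R\oplus M$, so the image $(a,x_1)=\alpha(1,0)$ sits in $R\oplus M$, and the free-summand hypothesis on $M$ can be cited directly with no detour. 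Your extra step is sound: the additivity $\frk(R_\p\oplus M_\p)=1+\frk(M_\p)$ is exactly Lemma~\ref{Induction delta} applied to the summand $R_\p\oplus 0$ over the local ring $R_\p$, and combined with $\frk(R_\p\oplus M_\p)=\frk(R_\p\oplus N_\p)$ it gives $\frk(N_\p)=\frk(M_\p)\gs 1+\dim_X(\p)$. If you keep your orientation, it would be cleaner to cite Lemma~\ref{Induction delta} explicitly rather than gesture at a ``standard Nakayama-based local Bass cancellation argument''; but the simplest fix is to adopt the paper's orientation and apply the free-basic machinery to $M$, where the hypothesis already lives.
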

\begin{proof}
Since $Q$ is projective, we can find another projective module $Q'$ such that $Q \oplus Q' \cong R^a$, for some integer $a$. We then obtain that $R^a \oplus M \cong R^a \oplus N$ and, by induction on $a$, we may assume that $R \oplus M \cong R \oplus N$. Let $\alpha:R\oplus N \to R \oplus M$ be an isomorphism, and set $\alpha((1,0)) = (a,x_1)$. We want to show that the composition
\[
\xymatrixcolsep{5mm}
\xymatrixrowsep{2mm}
\xymatrix{
R\oplus N \ar[rr]^-{\alpha} && R \oplus M \ar[rr]^-\beta && R \oplus M \ar[rr]^-\gamma && R \oplus M \ar[rr]^-\eta && R \oplus M \\
(1,0) \ar[rr] && (a,x_1) \ar[rrrr] &&&& (1,x) \ar[rr] && (1,0)
}
\]
is an isomorphism, with $\beta, \gamma$ and $\eta$ to be defined later. Note that, since $(1,0)$ is $X$-free-basic in $R \oplus M$, and $\alpha$ is an isomorphism by assumption, we have that $(a,x_1)$ is $X$-free-basic in $R \oplus M$. Let $\{x_1,\ldots,x_n\}$ be a set of generators of $M$, and recall that $\delta_\p(\{x_1,\ldots,x_n\}) \gs 1+\dim_X(\p)$ for all $\p \in X$, by assumption. By Theorem~\ref{Free-basic elements exist}, there exists an $X$-free-basic element $x \in M$ of the form $x=x_1+az$, for some $z \in M$. Define a map $\varphi:R \to M$, by setting $\varphi(1) = z$. Define $\beta:R \oplus M \to R \oplus M$ via the following matrix
\begin{eqnarray*}
\left[
\begin{array}{cc}
1_R & 0 \\ \varphi & 1_M
\end{array}
\right].
\end{eqnarray*}
It is easy to check that $\beta$ is an isomorphism. Note that $\beta((a,x_1)) = (a,\varphi(a)+x_1) = (a,az+x_1) = (a,x)$. Since $x \in M$ is $X$-free-basic, we have that $M \cong Rx \oplus M'$, for some $R$-module $M'$. Therefore, we may define an $R$-module map $\psi:M \to R$ as $\psi(x) = 1-a$. Define $\gamma: R \oplus M \to R \oplus M$ via the following matrix:
\begin{eqnarray*}
\left[
\begin{array}{cc}
1_R & \psi \\ 0 & 1_M
\end{array}
\right].
\end{eqnarray*}
The map $\gamma$ is an isomorphism, and it is such that $\gamma((a,x)) = (a+\psi(x),x) = (1,x)$. Finally, consider the map $\theta: R \to M$, defined as $\theta(1) = -x$. Define $\eta:R \oplus M \to R \oplus M$ via the matrix:
\begin{eqnarray*}
\left[
\begin{array}{cc}
1_R & 0 \\ \theta & 1_M
\end{array}
\right].
\end{eqnarray*}
The map $\eta$ is an isomorphism, and $\eta(1,x) = (1,x+\theta(1)) = (1,0)$. Consider the composition $\epsilon = \eta \circ \gamma \circ \beta \circ \alpha:R \oplus N \to R \oplus M$, which is an isomorphism such that $\epsilon((1,0)) = (1,0)$. We then have a commutative diagram
\[
\xymatrixcolsep{5mm}
\xymatrixrowsep{2mm}
\xymatrix{
0 \ar[rr] && R \ar[ddd]_{\cong}^-{id_R} \ar[rr] && R \oplus N \ar[rr]  \ar[ddd]_{\cong}^-{\epsilon} && N \ar@{-->}[ddd]\ar[rr] && 0 \\ \\ \\ 
0 \ar[rr] && R \ar[rr] && R \oplus M \ar[rr] && M \ar[rr] && 0
}
\]
that forces the induced map $N \to M$ to be an isomorphism.
\end{proof}

Bass's Cancellation Theorem for projective modules, Theorem~\ref{Bass's Cancellation Theorem} \cite[Theorem 9.1]{Bass}, now follows immediately from Theorem~\ref{Bass's Cancellation general}.

\section{Free summands with respect to $R$-submodules of \texorpdfstring{$\Hom_R(M,R)$}{Hom(M,R)}} \label{Cartier Section}

Throughout this section, $R$ denotes a commutative Noetherian ring with identity. Let $M$ be a finitely generated $R$-module. Theorem~\ref{Generalized Serre} provides a criterion for when $M$ has a free summand. Another way of saying this, it is a criterion that establishes when a surjective $R$-linear map $M\rightarrow R$ exists. The purpose of this section is to establish Theorem~\ref{Generalized Serre 2 Intro}, which is a generalization of Theorem~\ref{Serre's Theorem Intro}, and provides a criterion for when a surjective map $M \to R$ exists, and can be chosen to be inside a given $R$-submodule $\E$ of $\Hom_R(M,R)$. The authors use this results in \cite{DSPY} to establish the existence of a global F-signature of a Cartier subalgebra of $\C^R = \bigoplus_e \Hom_R(F^e_*R,R)$.

We now describe the setup and the notions that we will work with. 
Let $M$ be a finitely generated $R$-module, and $\E$ an $R$-submodule of $\Hom_R(M,R)$. We say that a summand $F$ of $M$ is a free $\E$-summand if $F\cong R^n$ is free and the projection $\varphi: M\to F\cong R^n$ is a direct sum of elements of $\E$. Observe that the choice of an isomorphism $F\cong R^n$ does not affect whether or not the projection $M \to F \cong R^n$ is a direct sum of elements of $\E$.
 
We now aim at defining free-basic elements with respect to $R$-submodules of $\Hom_R(M,R)$. Following the approach of Section~\ref{Free-Basic Section}, given a subset $S\subseteq M$ and a prime $\p\in\Spec(R)$, we denote by $\delta^{\E}_\p(S,M)$ the largest integer $n$ satisfying one of the following equivalent conditions:

\begin{enumerate}
\item There exists a free $\E_\p$-summand $F$ of $M_\p$ such that the image of $S$ under the natural map $M_\p\rightarrow F\rightarrow F/\p F$ generates a $R_\p/\p R_\p$-vector subspace of rank $n$.
\item There exists a free $\E_\p$-summand $G$ of $M_\p$ of rank $n$ which is contained in $\lr{S}_\p$.
\end{enumerate} 
Whenever $M$ is clear from the context, we denote $\delta_\p^\E(S,M)$ simply by $\delta_\p^\E(S)$.

\begin{Lemma}\label{Closed Set 2} Let $R$ be a commutative Noetherian ring, $M$ a finitely generated $R$-module, $X$ a basic set, and $\E$ an $R$-submodule of $\Hom_R(M,R)$. For any subset $S$ of $M$ and any integer $t\in\N$, the set $Y_t:=\{\p\in X\mid \delta^\E_\p(S)\ls t\}$ is closed.
\end{Lemma}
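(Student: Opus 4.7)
The plan is to mirror the proof of Lemma~\ref{Closed Set} while keeping track of the additional requirement that projections lie inside $\E$. As in that proof, $Y_t = \{\p \in \Spec(R) \mid \delta_\p^\E(S) \ls t\} \cap X$, so it suffices to show that the complement $U_t := \{\p \in \Spec(R) \mid \delta_\p^\E(S) > t\}$ is open in $\Spec(R)$.

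First I would fix $\p \in U_t$ and set $\delta := \delta_\p^\E(S)$. Unpacking the definition, there is an inclusion $\iota \colon R_\p^\delta \hookrightarrow \lr{S}_\p \subseteq M_\p$ together with a projection $\pi = (\varphi_1, \ldots, \varphi_\delta) \colon M_\p \to R_\p^\delta$ satisfying $\varphi_i \in \E_\p$ and $\pi \circ \iota = \mathrm{id}$. Since $\E_\p = \E \otimes_R R_\p$ and $\lr{S}$ is finitely generated, each $\varphi_i$ lifts to an element of $\E$ after inverting some element of $R \smallsetminus \p$, and the $\delta$ generators of the image of $\iota$ can be realized inside $\lr{S}$ after such an inversion as well. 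Taking the product of these finitely many elements produces $s \in R \smallsetminus \p$ together with maps $\iota_s \colon R_s^\delta \to \lr{S}_s \subseteq M_s$ and $\pi_s \colon M_s \to R_s^\delta$, where $\pi_s$ is still a direct sum of $\delta$ elements of $\E_s$, and which recover $\iota$ and $\pi$ after localizing at $\p$.

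Mirroring the kernel/cokernel argument of Lemma~\ref{Closed Set}, I would let $K$ and $C$ denote the kernel and cokernel of the composition $\pi_s \circ \iota_s \colon R_s^\delta \to R_s^\delta$; both are finitely generated over $R_s$ and vanish at $\p$, so after absorbing a further element of $R \smallsetminus \p$ into $s$ I may assume $K = C = 0$, i.e.\ that $\pi_s \circ \iota_s$ is an automorphism of $R_s^\delta$. Replacing $\pi_s$ by $(\pi_s \circ \iota_s)^{-1} \circ \pi_s$ yields $\pi_s \circ \iota_s = \mathrm{id}$ while keeping the projection a direct sum of $\delta$ elements of $\E_s$, since its coordinates are $R_s$-linear combinations of the $\varphi_{i,s}$ and $\E_s$ is an $R_s$-module. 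Localizing this data at any $\q \in D(s)$ then produces a free $\E_\q$-summand of $M_\q$ of rank $\delta$ contained in $\lr{S}_\q$, so $\delta_\q^\E(S) \gs \delta > t$ and $D(s) \subseteq U_t$.

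The one point beyond the proof of Lemma~\ref{Closed Set} that requires care, and the main potential obstacle, is that the $\E$-structure must be preserved both under the initial localization and under the adjustment by $(\pi_s \circ \iota_s)^{-1}$. This is handled cleanly because $\E$ is an $R$-module, so $\E_s$ is an $R_s$-module and any $R_s$-linear combination of its elements remains in $\E_s$; otherwise the argument is formally parallel to that of Lemma~\ref{Closed Set}.
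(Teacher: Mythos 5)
Your proof is correct and follows the same strategy as the paper's: reduce to showing the complement is open, lift the $\E_\p$-splitting data to a neighborhood $D(s)$, and use the vanishing of the kernel and cokernel of the composed map on a smaller basic open set. You are slightly more explicit than the paper about why inverting $(\pi_s\circ\iota_s)$ keeps the retraction a direct sum of elements of $\E_s$ (the paper simply says to proceed as in Lemma~\ref{Closed Set} once the lift lands in $\E$), but this is the same argument.
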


\begin{proof} As in the proof of Lemma~\ref{Closed Set}, it is enough to show $\{\p\in \Spec(R)\mid \delta^\E_\p(S)> t\}$ is open. Let $\p \in X$ be such that $\delta^\E_\p(S)> t$. By definition of $\delta_\p^\E(S)$, the identity map $R^{\delta_\p(S)}_\p \rightarrow R^{\delta_\p(S)}_\p$ factors as  $R^{\delta_\p(S)}_\p\subseteq \lr{S}_\p\subseteq M_\p\rightarrow R^{\delta^\E_\p(S)}_\p$ where $M_\p\rightarrow R^{\delta^\E_\p(S)}_\p$ is a direct sum of elements of $\E_\p$, i.e., $R^{\delta^\E_\p(S)}_\p$ is an $\E_\p$-summand. The map $M_\p\rightarrow R^{\delta^\E_\p(S)}_\p$ is the localization of a map $M\rightarrow  R^{\delta^\E_\p(S)}$, which is also a direct sum of elements in $\E$. One may now proceed as in Lemma~\ref{Closed Set}. 
\end{proof}

\begin{Lemma}\label{Crucial Lemma 2} Let $R$ be a commutative Noetherian ring, $M$ a finitely generated $R$-module, $X$ a basic set, $\E$ an $R$-submodule of $\Hom_R(M,R)$, and $S$ a subset of $M$. There exists a finite set of primes $\Lambda\subseteq X$ such that if $\p\in X \smallsetminus \Lambda$, there exists $\q \in \Lambda$ such that $\q\subsetneq \p$ and $\delta^\E_\p(S)=\delta^\E_\q(S)$.
\end{Lemma}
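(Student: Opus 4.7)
The plan is to imitate almost verbatim the proof of Lemma~\ref{Crucial Lemma}, replacing $\delta_\p$ by $\delta_\p^\E$ throughout and invoking Lemma~\ref{Closed Set 2} in place of Lemma~\ref{Closed Set}.

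First I would record the monotonicity property: if $\q \subseteq \p$ are primes in $\Spec(R)$, then $\delta_\q^\E(S) \gs \delta_\p^\E(S)$. This is the single step that genuinely requires a sanity check in the $\E$-setting. Using condition (2) in the definition of $\delta_\p^\E(S)$, a free $\E_\p$-summand $G \subseteq \lr{S}_\p$ of $M_\p$ of rank $n = \delta_\p^\E(S)$ is witnessed by a splitting $M_\p \to G$ which is a direct sum of elements of $\E_\p$. Further localizing at $\q$ produces a splitting $M_\q \to G_\q$ which is a direct sum of elements of $\E_\q$, so $G_\q \subseteq \lr{S}_\q$ is a free $\E_\q$-summand of $M_\q$ of rank $n$, giving $\delta_\q^\E(S) \gs n$.

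Next, set $Y_t := \{\p \in X \mid \delta_\p^\E(S) \ls t\}$, which is closed in $X$ by Lemma~\ref{Closed Set 2}. Since $\delta_\p^\E(S)$ is uniformly bounded (for example, by $\mu(\lr{S})$, which is finite because $\lr{S}$ is a submodule of the Noetherian module $M$), the ascending chain $Y_0 \subseteq Y_1 \subseteq \cdots$ stabilizes at $X$, and hence only finitely many distinct sets $Y_t$ occur. By Proposition~\ref{Properties about Basic Sets}, each $Y_t$ is a finite union of irreducible closed subsets of $X$, each of which has the form $V(\q) \cap X$ for a unique generic point $\q \in X$. Let $\Lambda \subseteq X$ be the finite collection consisting of all such generic points, taken over the finitely many relevant values of $t$.

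Finally, given $\p \in X \smallsetminus \Lambda$, set $t := \delta_\p^\E(S)$. Since $\p \in Y_t$, I may choose an irreducible component of $Y_t$ of the form $V(\q) \cap X$ containing $\p$. Then $\q \in \Lambda$, $\q \subseteq \p$, and $\q \ne \p$ (because $\p \notin \Lambda$), forcing $\q \subsetneq \p$. The inequalities $t = \delta_\p^\E(S) \ls \delta_\q^\E(S) \ls t$, where the first uses monotonicity and the second uses $\q \in Y_t$, force the equality $\delta_\q^\E(S) = \delta_\p^\E(S)$. The only mildly nontrivial point in the whole argument is the specialization step for $\delta_\p^\E$; every other step is a formal rerun of the proof of Lemma~\ref{Crucial Lemma}.
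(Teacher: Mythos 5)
Your proposal is correct and follows essentially the same route as the paper, which simply forms $\Lambda$ from the generic points of the finitely many closed sets $Y_t$ and then refers back to the proof of Lemma~\ref{Crucial Lemma}. The one genuinely worthwhile addition on your part is spelling out the monotonicity $\delta^\E_\q(S) \gs \delta^\E_\p(S)$ for $\q \subseteq \p$, which the paper states explicitly only for $\delta_\p$ but implicitly uses here as well.
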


\begin{proof} Let $Y_t$ be as in Lemma~\ref{Closed Set 2}. Take $\Lambda$ to be the collection of the finitely many generic points of the finitely many closed sets $Y_t$ as $t$ varies in $\N$. The rest of the proof is identical to that of Lemma~\ref{Crucial Lemma}.\end{proof}

\begin{Definition}
Let $R$ be a commutative Noetherian ring, $M$ a finitely generated $R$-module, $X$ a basic set, and $\E$ an $R$-submodule of $\Hom_R(M,R)$. We say that $x\in M$ is a $\p$-free-basic element for $M$ with respect to $\E$ if $R_\p\cong \lr{x}_\p$ and the natural inclusion $\lr{x} \subseteq M$ splits via an element of $\E_\p$, when localized at $\p\in X$. We say that $x \in M$ is $X$-free-basic for $M$ with respect to $\E$ if $x$ is $\p$-free-basic for all $\p \in X$. Whenever no confusion may arise, we will just say that an element $x \in M$ is $X$-free-basic with respect to $\E$. 
\end{Definition}

Observe that, if $M$ is a finitely generated $R$-module, $X$ is a basic set and $\p \in X$ is a prime, then $x\in M$ is $\p$-free-basic with respect to $\E$ if and only if $\delta^\E_\p(\{x\})=1$. More generally, for $\p \in \Spec(R)$, we will say that a finite subset $S=\{x_1,\ldots,x_n\}$ of $M$ is a $\p$-free-basic set (for $M$) with respect to $\E$ if $\delta^\E_\p(S)\gs \min\{n, 1+\dim_X (\p)\}$. We will say that $S$ is an $X$-free-basic set (for $M$) with respect to $\E$ if $S$ is $\p$-free-basic with respect to $\E$ for all $\p\in X$. 

The proofs of the following two lemmas are similar to those of the corresponding results in Section~\ref{Free-Basic Section}. We include a sketch of the arguments for sake of completeness. Note that, given an $R$-submodule $\E$ of $\Hom_R(M,R)$, we can view $R \oplus \E \cong \Hom_R(R,R) \oplus \E$ as an $R$-submodule of $R \oplus \Hom_R(M,R) \cong \Hom_R(R \oplus M,R)$. 

\begin{Lemma}\label{Main Lemma 2} Let $R$ be a commutative Noetherian ring, $M$ a finitely generated $R$-module, $X$ a basic set, $\E$ an $R$-submodule of $\Hom_R(M,R)$. Let $S=\{x_1,\ldots,x_n\}\subseteq M$ be an $X$-free-basic set with respect to $\E$ and $(a,x_1)\in R\oplus M$ be an $X$-free-basic element with respect to $R \oplus \E$. There exist $a_1,\ldots,a_{n-1}$ such that 
\[
\ds S'=\{x_1',x_2', \ldots,x_{n-1}'\}=\{x_1+aa_1x_n,x_2+a_2x_n, \ldots,x_{n-1}+a_{n-1}x_n\}
\]
is $X$-free-basic with respect to $\E$, and $(a,x_1')$ is an $X$-free-basic element with respect to $R \oplus \E$.
\end{Lemma}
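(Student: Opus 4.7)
The plan is to follow the structure of the proof of Lemma~\ref{Main Lemma} step by step, substituting $\delta^\E_\p$ for $\delta_\p$, free $\E$-summands for free summands, and invoking Lemmas~\ref{Closed Set 2} and~\ref{Crucial Lemma 2} in place of Lemmas~\ref{Closed Set} and~\ref{Crucial Lemma}. First I would apply Lemma~\ref{Crucial Lemma 2} to $S$ to obtain a finite set $\Lambda = \{\q_1, \ldots, \q_m\} \subseteq X$ such that for each $\p \in X \smallsetminus \Lambda$ there is some $\q \in \Lambda$ with $\q \subsetneq \p$ and $\delta^\E_\p(S) = \delta^\E_\q(S)$. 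Exactly as in the original proof, for any choice of $a_1, \ldots, a_{n-1} \in R$ and any $\p \in X$, the resulting set $S'$ satisfies $\delta^\E_\p(S') \gs \delta^\E_\p(S) - 1$; combined with the conclusion of Lemma~\ref{Crucial Lemma 2}, this immediately handles all $\p \in X \smallsetminus \Lambda$. The task thus reduces to choosing the coefficients so that $S'$ is also $\q_\ell$-free-basic with respect to $\E$ for every $\q_\ell \in \Lambda$.

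I would then order $\Lambda$ so that $\q_\ell$ is minimal in $\{\q_1, \ldots, \q_\ell\}$ for each $\ell$, and proceed by induction on $\ell$. At stage $\ell$, if the set $T$ produced at the previous stage already happens to be $\q_\ell$-free-basic with respect to $\E$, I set $S' = T$; otherwise, I split into the cases $i \ne 1$ and $i = 1$ depending on which image $\overline{x_j + a_j x_n}$ in $F/\q_\ell F$ lies in the span of the previous ones, where $F \subseteq \lr{T}_{\q_\ell}$ is a free $\E_{\q_\ell}$-summand of $M_{\q_\ell}$ realizing $\delta^\E_{\q_\ell}(T)$. In either case, a Chinese-remainder-style perturbation of one coefficient by an element $r \in (\q_1 \cap \cdots \cap \q_{\ell-1}) \smallsetminus \q_\ell$ produces the required set; because $r \in \q_i$ for $i<\ell$, all previously established $\q_i$-free-basic conditions are preserved.

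The main obstacle lies in the case $i=1$, where one must rule out $a \in \q_\ell$ by exhibiting a free $\E_{\q_\ell}$-summand of $M_{\q_\ell}$ of rank $\delta^\E_{\q_\ell}(T)+1$ contained in $\lr{T}_{\q_\ell}$, contradicting the maximality of $\delta^\E_{\q_\ell}(T)$. I would mirror the construction in the proof of Lemma~\ref{Main Lemma}: assuming $a \in \q_\ell$, the element $(0, x_1')$ becomes $\q_\ell$-free-basic with respect to $R \oplus \E$, and hence so is $(0,y)$ for $y := x_1' - \iota \pi(x_1')$, where $\iota,\pi$ are the inclusion and projection associated to $F$. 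The subtle point, which needs to be verified carefully in the $\E$ setting, is that a splitting of $\lr{(0,y)}_{\q_\ell}$ by an element of $(R \oplus \E)_{\q_\ell}$ restricts on the second component to an element of $\E_{\q_\ell}$ splitting $\lr{y}_{\q_\ell}$ (because the first coordinate of $(0,y)$ vanishes), and combining this with the $\E_{\q_\ell}$-projection onto $F$ realizes $F' = \lr{y}_{\q_\ell} \oplus F$ as a free $\E_{\q_\ell}$-summand of $M_{\q_\ell}$ contained in $\lr{T}_{\q_\ell}$, producing the desired contradiction.
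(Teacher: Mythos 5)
Your proposal is correct and follows exactly the route the paper takes: the paper's own proof of Lemma~\ref{Main Lemma 2} is precisely the sketch you describe, namely rerun the proof of Lemma~\ref{Main Lemma} with $\delta^\E_\p$ in place of $\delta_\p$, free $\E$-summands in place of free summands, and Lemmas~\ref{Closed Set 2} and~\ref{Crucial Lemma 2} in place of their untwisted analogues. You have in fact isolated the one point the paper glosses over --- that in the $i=1$ case the splitting $(\alpha,\phi)\in(R\oplus\E)_{\q_\ell}$ of $\lr{(0,y)}_{\q_\ell}$ forces $\phi(y)$ to be a unit with $\phi\in\E_{\q_\ell}$, so that after correcting $\phi$ by subtracting multiples of the $\E_{\q_\ell}$-projections onto $F$ one realizes $\lr{y}_{\q_\ell}\oplus F$ as a genuine free $\E_{\q_\ell}$-summand --- and your treatment of it is sound.
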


\begin{proof}
Let $\Lambda$ be as in Lemma~\ref{Crucial Lemma 2}. For any choice of $a_1,\ldots,a_{n-1}\in R$ and $\p\in X \smallsetminus \Lambda$, the set $\{x_1+aa_1x_n,\ldots,x_{n-1}+a_{n-1}x_n\}$ is $\p$-free-basic. In fact, it is enough to notice that, if we let $S'=\{x_1+aa_1x_n,\ldots,x_{n-1}+a_{n-1}x_n\}$, then $\delta^\E_\p(S')\gs \delta^\E_\p(S)-1$. If $\p\in X \smallsetminus \Lambda$ then by Lemma~\ref{Crucial Lemma 2} there exists $\q\in\Lambda$ such that $\q\subsetneq \p$ and $\delta^\E_\q(S)=\delta^\E_\p(S)$. Hence 
\[
\ds \delta^\E_\p(S')\gs \delta^\E_\p(S)-1=\delta^\E_\q(S)-1\gs \min\{n, 1+\dim_X(\q)\}-1\gs\min\{n-1, 1+\dim_X(\p)\}.
\]
The rest of the proof is done by induction, and it is along the same lines as the proof of Lemma~\ref{Main Lemma} in Section~\ref{Free-Basic Section}. The only main difference is that $\delta_\p(-)$ has to be replaced with $\delta_\p^\E(-)$, and all the maps $M_\p \to R_\p$ involved have to be elements of $\E_\p$. It is also worth noting that if $(a,x_1) \in R\oplus M$ is a $\p$-free-basic element with respect to $R \oplus \E$, then for any $z \in \p M$, the element $(a,x_1+z)$ is still $\p$-free-basic with respect to $R \oplus \E$.
\end{proof}

As an application of Lemma~\ref{Main Lemma 2}, we obtain the existence of free-basic elements with respect to $R$-submodules $\E$ of $\Hom_R(M,R)$.

\begin{Theorem}\label{Free-basic elements exist 2} Let $R$ be a commutative Noetherian ring, $N \subseteq M$ finitely generated $R$-modules, $X$ a basic set, and $\E$ an $R$-submodule of $\Hom_R(M,R)$. Assume that, for each $\p\in X$, $N_\p$ contains a free $R_\p$-submodule $F(\p)$ of rank at least $1+\dim_X (\p)$ that is a free $\E_\p$-summand of $M_\p$. Then there exists $x\in N$ that is $X$-free-basic for $M$ with respect to $\E$. Moreover, if an element $(a,y)\in R\oplus N$ is $X$-free-basic for $R \oplus M$ with respect to $R \oplus \E$, then $x$ can be chosen to be of the form $x=y+az$, for some $z \in N$.
\end{Theorem}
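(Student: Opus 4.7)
The plan is to mimic the proof of Theorem~\ref{Free-basic elements exist}, replacing Lemma~\ref{Main Lemma} with its $\E$-version Lemma~\ref{Main Lemma 2}, and replacing $\delta_\p(-)$ with $\delta_\p^\E(-)$ throughout. I will first establish the ``moreover'' statement, and then deduce the first claim from it by choosing a trivial free-basic pair.

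First I would assume that $(a,y) \in R \oplus N$ is $X$-free-basic for $R \oplus M$ with respect to $R \oplus \E$. Complete $y$ to a generating set $S = \{y, y_1, \ldots, y_n\}$ of $N$. The key verification is that $S$ is $X$-free-basic for $M$ with respect to $\E$. For this, fix $\p \in X$ and note that, by hypothesis, $N_\p$ contains a free $\E_\p$-summand $F(\p)$ of $M_\p$ of rank at least $1+\dim_X(\p)$. Since $S$ generates $N$, we have $\lr{S}_\p = N_\p \supseteq F(\p)$, so condition (2) in the definition of $\delta_\p^\E(S,M)$ is satisfied with $G = F(\p)$. Hence $\delta_\p^\E(S) \gs 1+\dim_X(\p) \gs \min\{n+1, 1+\dim_X(\p)\}$, showing $S$ is a $\p$-free-basic set for $M$ with respect to $\E$.

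Next, I would iterate Lemma~\ref{Main Lemma 2}. At each step, starting from an $X$-free-basic set of size $k$ together with the element $(a, x_1)$ which remains $X$-free-basic with respect to $R \oplus \E$, the lemma produces an $X$-free-basic set of size $k-1$ whose first element has the form $x_1 + a a_1 x_k$, with the $(a, x_1 + a a_1 x_k)$ still $X$-free-basic. Starting from $|S| = n+1$ and iterating $n$ times produces a single element $x \in N$ of the form
\[
x = y + a(a_1 y_1 + \ldots + a_n y_n)
\]
for some $a_1, \ldots, a_n \in R$, and such that $\{x\}$ is an $X$-free-basic set for $M$ with respect to $\E$. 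By the definition immediately following Lemma~\ref{Crucial Lemma 2}, this is equivalent to saying that $x$ is $X$-free-basic with respect to $\E$, proving the ``moreover'' part.

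For the first claim, simply observe that, for any $y \in N$, the pair $(1, y) \in R \oplus N$ is $X$-free-basic for $R \oplus M$ with respect to $R \oplus \E$: at any $\p \in X$, projection onto the first coordinate is an element of $R \oplus \E$ which splits the inclusion $\lr{(1,y)}_\p \subseteq (R \oplus M)_\p$, and $\lr{(1,y)}_\p \cong R_\p$. Applying the case already proved with this choice yields the desired $x \in N$. The main technical obstacle in the argument is already absorbed into Lemma~\ref{Main Lemma 2}, whose proof must carefully track that the splitting maps used throughout lie in $\E_\p$; the rest is a direct transcription of the argument for Theorem~\ref{Free-basic elements exist}.
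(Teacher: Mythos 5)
Your proof is correct and follows essentially the same approach as the paper: complete $y$ to a generating set of $N$, verify the free-basic set condition using the hypothesis on local free $\E_\p$-summands, iterate Lemma~\ref{Main Lemma 2} to shrink the set to a single element of the form $y+az$, and finally obtain existence from the observation that $(1,y)$ is always $X$-free-basic with respect to $R\oplus\E$. The paper states this more tersely (by reference to Theorem~\ref{Free-basic elements exist}), while you have filled in the details, but the argument is the same.
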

\begin{proof}
As for Theorem~\ref{Free-basic elements exist}, the last claim follows from a repeated use of Lemma~\ref{Main Lemma 2} on a generating set $S=\{y,y_1,\ldots,y_n\}$ of $N$. For existence, note that the element $(1,y) \in R \oplus N$ is $X$-free-basic for $R\oplus M$ with respect to $R \oplus \E$ for all $y \in N$. Then use the last claim.
\end{proof}

We now state two lemmas that will allow us to set up an induction process in Theorem~\ref{Generalized Serre 2}. We do not include proofs, as the arguments are completely analogous to those of Lemma~\ref{delta=length} and Lemma~\ref{Induction delta} from Section~\ref{Free-Basic Section}.
\begin{Lemma} \label{delta=length2} Let $(R,\m)$ be a local ring, $M$ a finitely generated $R$-module, $S$ a subset of $M$, and $\E$ an $R$-submodule of $\Hom_R(M,R)$. Consider $I^\E(S,M) := \{y \in \lr{S} \mid f(y) \in \m$ for all $f \in \E\}$, which is an $R$-submodule of $\lr{S}$. Then
\[
\ds \delta_\m^\E(S,M) = \lambda_R\left(\frac{\lr{S}}{I^\E(S,M)}\right).
\]
\end{Lemma}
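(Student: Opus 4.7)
My plan is to follow the strategy of Lemma~\ref{delta=length} with the crucial modification that every $R$-linear map used to split summands must lie in $\E$. Set $\delta := \delta_\m^\E(S,M)$. Using condition~(2) in the definition of $\delta_\m^\E$, I would pick a free $\E$-summand $F \subseteq \lr{S}$ of $M$ of rank $\delta$; write $M = F \oplus T$, and fix a basis $e_1,\dotsc,e_\delta$ of $F$ together with $f_1,\dotsc,f_\delta \in \E$ realizing the projection $\pi_F = (f_1,\dotsc,f_\delta)$, so that $f_i(e_j) = \delta_{ij}$. The goal is to prove the identification $I^\E(S,M) = \m F \oplus (T \cap \lr{S})$: since $F \subseteq \lr{S}$ gives $\lr{S} = F \oplus (T \cap \lr{S})$, the length formula then follows from $\lr{S}/I^\E(S,M) \cong F/\m F$, which has length $\delta$ over $R$.

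The inclusion $\m F \subseteq I^\E(S,M)$ is immediate by $R$-linearity, and the heart of the argument is $T \cap \lr{S} \subseteq I^\E(S,M)$, which I would prove by contradiction from the maximality of $\delta$. If some $t \in T \cap \lr{S}$ and $f \in \E$ gave $u := f(t) \notin \m$, the key trick would be to correct $f$ to $f' := f - \sum_{i=1}^\delta f(e_i) f_i$, which lies in $\E$ because $\E$ is an $R$-submodule, vanishes on $F$, and still satisfies $f'(t) = u$. Then $(f_1,\dotsc,f_\delta,f'): M \to R^{\delta+1}$ is a direct sum of elements of $\E$, split surjective via the section $(x_1,\dotsc,x_\delta,r) \mapsto \sum x_i e_i + r u^{-1} t$, and its image $F + Rt = F \oplus Rt$ sits inside $\lr{S}$. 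This exhibits a free $\E$-summand of $M$ of rank $\delta+1$ contained in $\lr{S}$, contradicting the maximality of $\delta$. For the reverse inclusion, given $y \in I^\E(S,M)$ decomposed as $y_F + y_T$ along $M = F \oplus T$, the containment $F \subseteq \lr{S}$ forces $y_T \in T \cap \lr{S}$, and applying each $f_i \in \E$ to $y$ forces the coordinates of $y_F$ to lie in $\m$, so $y_F \in \m F$.

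The main obstacle, as anticipated, is the construction of $f'$: unlike in Lemma~\ref{delta=length}, where arbitrary maps in $\Hom_R(M,R)$ are at our disposal, here we must produce an element of the (possibly much smaller) submodule $\E$ that simultaneously vanishes on $F$ and detects $t$ as a unit. The availability of the maps $f_i \in \E$ dualizing the chosen basis of $F$ is precisely what makes this correction possible while staying inside $\E$; without them, the maximality of $\delta$ could not be leveraged.
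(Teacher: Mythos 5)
Your proof is correct and follows exactly the approach the paper defers to (it omits the proof of Lemma~\ref{delta=length2} by declaring it ``completely analogous'' to Lemma~\ref{delta=length}, which in turn cites an external reference for the identification $I(S,M) = \m F \oplus (T \cap \lr{S})$). You have filled in precisely the detail that makes the analogy legitimate: the coordinate functionals $f_1,\dotsc,f_\delta$ lie in $\E$ because they realize the $\E$-splitting of $F$, so the correction $f' = f - \sum f(e_i) f_i$ stays inside $\E$ while vanishing on $F$, which is what makes the maximality argument go through in the restricted setting.
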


\begin{Lemma}\label{Induction delta2} Let $R$ be a commutative Noetherian ring, $M$ a finitely generated $R$-module, $\E$ an $R$-submodule of $\Hom_R(M,R)$, and $S$ a subset of $M$. Suppose that $F \subseteq \lr{S}$ is a free $\E$-summand of $M$ of rank $i$, so that we have a split surjection $\varphi: M \to F \cong R^i$ which is a direct sum of elements in $\E$. Define $M' = \ker(\varphi)$, so that we can write $M = F \oplus M'$. Let $S'$ be the projection of $S$ to $M'$ along the internal direct sum, and $\E'$ be the projection of $\E$ to $\Hom_R(M',R)$. For all $\p \in \Spec(R)$, we have $\delta_\p^{\E'}(S',M') = \delta_\p^\E(S,M)-i$.
\end{Lemma}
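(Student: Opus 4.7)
The plan is to mimic the proofs of Lemma~\ref{delta=length} and Lemma~\ref{Induction delta} from Section~\ref{Free-Basic Section}, with the only new ingredient being that one must keep track of which homomorphisms $M \to R$ actually lie in $\E$. First, since both sides of the claimed equality are computed prime by prime, I would localize at $\p$ and reduce to the case where $(R,\m)$ is local with $\p = \m$. Observe that localization preserves the decomposition $M = F \oplus M'$, preserves the fact that the split surjection $\varphi$ is a direct sum of elements in $\E$ (after localizing $\E$), and that $\E'_\p$ is the projection of $\E_\p$ along $M_\p = F_\p \oplus M'_\p$; so after localization we may simply write $M = F \oplus M'$, etc.

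Next, I would note that since $F \subseteq \lr{S}$ and $S' = \pi_{M'}(S)$ is the image of $S$ in $M'$, we get $\lr{S} = F \oplus \lr{S'}$. The key claim is then the decomposition
\[
I^\E(S,M) \;=\; \m F \,\oplus\, I^{\E'}(S',M').
\]
To prove it, take $y = u + v \in \lr{S}$ with $u \in F$, $v \in \lr{S'}$. Writing $\varphi = (\varphi_1,\dots,\varphi_i)$, each coordinate projection $\varphi_j$ is in $\E$ by the very definition of a free $\E$-summand. Hence if $y \in I^\E(S,M)$ then $\varphi_j(y) = \varphi_j(u) = u_j \in \m$ for all $j$, so $u \in \m F$. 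Since $u \in \m F$, every $f \in \E$ sends $u$ into $\m$, so $f(v) = f(y) - f(u) \in \m$ for all $f \in \E$; because $v \in M'$ and $\E' = \{f|_{M'} : f \in \E\}$, this says exactly $v \in I^{\E'}(S',M')$. The reverse inclusion is immediate: if $u \in \m F$ and $v \in I^{\E'}(S',M')$, then $f(u+v) = f(u) + f(v) \in \m$ for every $f \in \E$.

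Finally, I would combine this decomposition with Lemma~\ref{delta=length2}. The short exact sequence
\[
0 \longrightarrow \frac{F}{\m F} \longrightarrow \frac{\lr{S}}{I^\E(S,M)} \longrightarrow \frac{\lr{S'}}{I^{\E'}(S',M')} \longrightarrow 0
\]
induced by $\lr{S}=F\oplus \lr{S'}$ gives
\[
\delta_\m^\E(S,M) = \lambda_R\!\left(\tfrac{\lr{S}}{I^\E(S,M)}\right) = \lambda_R\!\left(\tfrac{F}{\m F}\right) + \lambda_R\!\left(\tfrac{\lr{S'}}{I^{\E'}(S',M')}\right) = i + \delta_\m^{\E'}(S',M'),
\]
as desired. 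The only real obstacle is verifying the decomposition of $I^\E(S,M)$; this is where the hypothesis that $F$ is an $\E$-summand (and not merely a summand) is used, via the membership $\varphi_j \in \E$ which forces $u \in \m F$.
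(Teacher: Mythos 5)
Your proof is correct and follows essentially the same route the paper indicates (it omits the proof of this lemma, saying only that the argument is analogous to Lemma~\ref{delta=length} and Lemma~\ref{Induction delta}): localize at $\p$, decompose $\lr{S} = F \oplus \lr{S'}$, establish $I^\E(S,M) = \m F \oplus I^{\E'}(S',M')$, and read off lengths via Lemma~\ref{delta=length2}. You correctly pinpoint the one place where the ``free $\E$-summand'' hypothesis is genuinely needed — the coordinate projections $\varphi_j$ must lie in $\E$ to force the $F$-component of any element of $I^\E(S,M)$ into $\m F$ — which is exactly the detail that upgrades the argument of Lemma~\ref{Induction delta} to the present setting.
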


Theorem~\ref{Generalized Serre 2 Intro} now follows from the existence of free-basic elements with respect to $\E$.
\begin{Theorem}\label{Generalized Serre 2} Let $R$ be a commutative Noetherian ring, $N \subseteq M$ finitely generated $R$-modules, $\E$ an $R$-submodule of $\Hom_R(M,R)$, and $X=j$-$\Spec(R)$. Assume that, for each $\p \in X$, $M_\p$ contains a free $\E_\p$-summand $F(\p)$ of rank at least $i+\dim_X(\p)$, for some positive integer $i$. Then $M$ contains a free $\E$-summand $F$ of rank at least $i$. Furthermore, if $F(\p) \subseteq N_\p$ for all $\p \in X$, then $F$ can be realized inside $N$.
\end{Theorem}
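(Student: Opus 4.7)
The plan is to mirror the inductive strategy used in the proof of Theorem~\ref{Generalized Serre}, replacing $\delta_\p$ by $\delta^\E_\p$ throughout and invoking the $\E$-analogues established in this section. As in that proof, it suffices to treat the stronger statement about realization inside $N$, since taking $N = M$ recovers the first assertion. Proceed by induction on $i \geq 1$.

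For the base case $i = 1$, choose a finite generating set $S$ of $N$; the hypothesis that $M_\p$ contains a free $\E_\p$-summand $F(\p) \subseteq N_\p$ of rank at least $1 + \dim_X(\p)$ yields $\delta^\E_\p(S, M) \geq 1 + \dim_X(\p)$ for every $\p \in X$. By Theorem~\ref{Free-basic elements exist 2}, there exists $x \in N$ that is $X$-free-basic for $M$ with respect to $\E$. The step that most distinguishes this argument from that of Theorem~\ref{Generalized Serre} is upgrading the local $\E_\m$-splittings of $\lr{x}_\m \subseteq M_\m$ at maximal ideals $\m$ to a single global splitting belonging to $\E$. For this, observe that since $X = j$-$\Spec(R) \supseteq \Max(R)$, the $\m$-free-basic condition at each $\m \in \Max(R)$ provides (after clearing denominators) some $f_\m \in \E$ with $f_\m(x) \notin \m$. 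Consequently the ideal $I := \{f(x) \mid f \in \E\}$ of $R$ is contained in no maximal ideal, so $I = R$, and one finds $f \in \E$ with $f(x) = 1$. This $f$ splits the inclusion $\lr{x} \subseteq M$ globally, exhibiting $F := \lr{x}$ as a free $\E$-summand of $M$ lying in $N$.

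For the inductive step $i > 1$, apply the base case to produce a rank-one free $\E$-summand $\lr{x} \subseteq N$ with splitting map $f \in \E$, and write $M = \lr{x} \oplus M'$. Since $\lr{x} \subseteq N$, one checks directly that $N = \lr{x} \oplus (N \cap M')$. Let $\E' \subseteq \Hom_R(M', R)$ denote the projection of $\E$ and let $S'$ denote the projection to $M'$ of a generating set $S$ of $N$. By Lemma~\ref{Induction delta2},
\[
\delta^{\E'}_\p(S', M') \;=\; \delta^\E_\p(S, M) - 1 \;\geq\; (i-1) + \dim_X(\p)
\]
for every $\p \in X$, which exhibits free $\E'_\p$-summands of $M'_\p$ of rank at least $(i-1) + \dim_X(\p)$ inside $\lr{S'}_\p \subseteq (N \cap M')_\p$. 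The inductive hypothesis applied to $N \cap M' \subseteq M'$ with $\E'$ then yields a free $\E'$-summand $F' \subseteq N \cap M'$ of $M'$ of rank at least $i-1$, with splitting $M' \to F'$ given by a tuple of maps of the form $g_j|_{M'}$ with $g_j \in \E$. A brief calculation shows that each corresponding component of the composed projection $M \to M' \to F'$ equals $g_j \circ \iota \circ p_2 = g_j - g_j(x) \cdot f$, where $\iota: M' \hookrightarrow M$ and $p_2: M \to M'$ are the structural maps, and hence lies in $\E$. Therefore $F := \lr{x} \oplus F' \subseteq N$ is a free $\E$-summand of $M$ of rank at least $i$, completing the induction.
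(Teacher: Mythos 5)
Your proof is correct and follows essentially the same route as the paper: reduce to the statement about $N$, induct on $i$, in the base case produce an $X$-free-basic element via Theorem~\ref{Free-basic elements exist 2} and globalize the local splittings, and in the inductive step peel off a rank-one $\E$-summand $\lr{x}$ and apply Lemma~\ref{Induction delta2} to $M'=\ker(\varphi)$ with the projected $\E'$. Your version is slightly more explicit at two points the paper leaves implicit --- verifying that $N = \lr{x} \oplus (N\cap M')$ so the inductive hypothesis applies, and computing $g_j\circ\iota\circ p_2 = g_j - g_j(x)\cdot f \in \E$ to confirm the assembled splitting of $F = \lr{x}\oplus F'$ is a direct sum of elements of $\E$ --- but these are elaborations, not a different argument.
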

\begin{proof}
It suffices to prove the statement involving $N$. We proceed by induction on $i \gs 1$. If $i=1$, then our assumptions guarantee that there exists $x \in N$ that is $X$-free-basic for $M$ with respect to $\E$, by Theorem~\ref{Free-basic elements exist 2}. Thus, the inclusion $R_\m \cong \lr{x}_\m \subseteq M_\m$ splits via an element of $\E_\m$, for all maximal ideals $\m \in \Max(R)$. In particular, $F:=\lr{x}$ is a free $R$-submodule of $N$ such that the inclusion $F \subseteq M$ splits. We claim that the splitting map $M \to F \cong R$ can be chosen to be inside $\E$. In fact, consider the inclusion $R \cong \lr{x} \subseteq M$ and apply the functor $\Hom_R(-,R)$. We get an induced map $h:\E \subseteq \Hom_R(M,R) \to \Hom_R(R,R) \cong R$, and it suffices to show that $h$ is surjective. This follows from the fact that, for each $\m \in \Max(R)$, the map $h_\m:\E_\m \to R_\m$ is surjective, because the element $x$ is $\m$-free basic for $M$ with respect to $\E$. Now assume that $i>1$. By the base case $i=1$, there exists an element $x \in N$ with a splitting map $\varphi: M \to \lr{x} \cong R$ that belongs to $\E$. Define $M' = \ker(\varphi)$, so that $M= \lr{x} \oplus M'$. Let $S'$ be the projection of $S$ to $M'$ along the internal direct sum, and $\E'$ be the projection of $\E$ to $\Hom_R(M',R)$. By Lemma~\ref{Induction delta2} we have that $\delta_\p^{\E'}(S',M') = \delta_\p^\E(S,M)-1 \gs i-1 + \dim_X(\p)$, and by induction $M'$ has a free $\E'$-summand $F' \subseteq \lr{S'} \subseteq N$ of rank at least $i-1$. It follows that $F:=\lr{x} \oplus F'$ is a free $\E$-summand of $M$ of rank at least $i$, that is also contained in $N$.
\end{proof}

As a concluding remark we point out that, choosing $\E = \Hom_R(M,R)$, we recover the theory developed in Section~\ref{Free-Basic Section}. Therefore, the techniques introduced in Section~\ref{Cartier Section} are indeed a generalization of the theory developed in Section~\ref{Free-Basic Section}. 
\section*{Acknowledgments}
We thank Mohan Kumar for many helpful suggestions and ideas regarding the methods used in Section~\ref{SectionElemApproach}. We also thank Craig Huneke for very useful discussions.

\bibliographystyle{alpha}
\bibliography{References}

\end{document}